\theoremstyle{definition}
\newtheorem{theo}{Theorem}[section]
\newtheorem{defi}[theo]{Definition}
\newtheorem{prop}[theo]{Proposition}
\newtheorem{defiprop}[theo]{Definition/Proposition}
\newtheorem{cor}[theo]{Corollary}
\newtheorem{exa}[theo]{Example}
\newtheorem{rem}[theo]{Remark}
\numberwithin{equation}{section}
\newcommand{\N}{{\mathbb N}}
\newcommand{\F}{{\mathbb F}}
\newcommand{\A}{{\mathbb A}}
\newcommand{\Z}{{\mathbb Z}}
\newcommand{\Q}{{\mathbb Q}}
\newcommand{\cC}{{\mathcal C}}
\newcommand{\cD}{{\mathcal D}}
\newcommand{\cI}{{\mathcal I}}
\newcommand{\cM}{{\mathcal M}}
\newcommand{\cB}{{\mathcal B}}
\newcommand{\cU}{{\mathcal U}}
\newcommand{\cV}{{\mathcal V}}
\newcommand{\cZ}{{\mathcal Z}}
\newcommand{\T}{\mbox{$^{\sf T}$}}
\newcommand{\subspace}[1]{\mbox{$\langle{#1}\rangle$}}
\newcommand{\inner}[2]{\mbox{$\langle{#1}\!\mid\!{#2}\rangle$}}
\newcommand{\Hom}{\mbox{\rm Hom}}
\newcommand{\Fnm}{\mbox{$\F^{n\times m}$}}
\newcommand{\rk}{{\rm rk}}
\newcommand{\drk}{\mbox{${\rm d}_{\rm rk}$}}
\newcommand{\dd}{\mbox{${\rm d}$}}
\newcommand{\tr}{{\rm tr}}
\newcommand{\cc}{{\rm c}}
\newcommand{\GL}{{\rm GL}}
\newcommand{\colsp}{\mbox{\rm colsp}}
\newcommand{\cIm}{\mbox{$\cI_{\text{max}}$}}
\newcommand{\qPM}{$q$-PM}
\newcounter{alp}
\newcounter{ara}
\newcounter{rom}
\newenvironment{romanlist}{\begin{list}{(\roman{rom})\hfill}{\usecounter{rom}
     \topsep.7ex \labelwidth.6cm \leftmargin.6cm \labelsep0cm
     \rightmargin0cm \parsep0ex \itemsep.5ex
     \partopsep0ex}}{\end{list}}
\newenvironment{alphalist}{\begin{list}{(\alph{alp})\hfill}{\usecounter{alp}
     \topsep-1.4ex \labelwidth.7cm \leftmargin.7cm \labelsep0cm
     \rightmargin0cm \parsep0ex \itemsep.5ex
     \partopsep-1.4ex}}{\end{list}}
\newenvironment{mylist}{\begin{list}{(\arabic{ara})\hfill}{\usecounter{ara}
     \topsep-1ex \labelwidth.73cm \leftmargin.73cm \labelsep0cm
     \rightmargin0cm \parsep0ex \itemsep.5ex
     \partopsep-1ex}}{\end{list}}
\newenvironment{mylist2}{\begin{list}{(\arabic{ara})\hfill}{\usecounter{ara}
     \topsep-1ex \labelwidth.88cm \leftmargin.88cm \labelsep0cm
     \rightmargin0cm \parsep0ex \itemsep.5ex
     \partopsep-1ex}}{\end{list}}
\newenvironment{mylist3}{\begin{list}{(\arabic{ara})\hfill}{\usecounter{ara}
     \topsep-1ex \labelwidth1cm \leftmargin1cm \labelsep0cm
     \rightmargin0cm \parsep0ex \itemsep.5ex
     \partopsep-1ex}}{\end{list}}
\let\@fnsymbol\@arabic
\begin{document}
		
\title{Independent Spaces of $q$-Polymatroids}
\author{Heide Gluesing-Luerssen\thanks{Department of Mathematics, University of Kentucky, Lexington KY 40506-0027, USA; heide.gl@uky.edu. HGL was partially supported by the grant \#422479 from the Simons Foundation.}\quad and Benjamin Jany\thanks{Department of Mathematics, University of Kentucky, Lexington KY 40506-0027, USA; benjamin.jany@uky.edu.}}

\date{May 3, 2021}
\maketitle
	
\begin{abstract}
\noindent 
This paper is devoted to the study of independent spaces of $q$-polymatroids.
With the aid of an auxiliary $q$-matroid it is shown that the collection of independent spaces satisfies the same properties as 
for $q$-matroids.
However, in contrast to $q$-matroids, the rank value of an independent space does not agree with its dimension.
Nonetheless, the rank values of the independent spaces fully determine the $q$-polymatroid, and this fact can 
be exploited to derive a cryptomorphism of $q$-polymatroids.
Finally, the notions of minimal spanning spaces, maximally strongly independent spaces, and bases will be elaborated on.
\end{abstract}

\section{Introduction}\label{S-Intro}
Thanks to their relation to rank-metric codes, $q$-matroids and $q$-polymatroids have recently garnered a lot of attention, \cite{BCIJ21,BCIJS20,BCJ21,GhJo20,GLJ21,GJLR19,JuPe18,Shi19}. 
Indeed, $\F_{q^m}$-linear rank-metric codes in $\F_{q^m}^n$ give rise to $q$-matroids, whereas 
$\F_q$-linear rank-metric codes induce $q$-polymatroids.
This leads to an abundance of examples of $q$-(poly)matroids.
In either case, the $q$-(poly)matroid induced by a rank-metric code arises via a rank function which captures the dimension of 
certain characteristic subspaces of the code in question.
As a consequence, the $q$-(poly)matroid reflects many of the algebraic and combinatorial properties of the code, such as
the generalized weights~\cite{GLJ21,GJLR19} and the rank-weight enumerator~\cite{BCIJ21,Shi19}.

For $q$-matroids a variety of cryptomorphic definitions are known~\cite{JuPe18,BCJ21}.
They are based on independent spaces, bases, circuits, spanning spaces, flats and many more; see the comprehensive 
account in \cite{BCJ21}.
For $q$-polymatroids, most of these notions have yet to be defined.
As to our knowledge the only existing notion are flats, which have been introduced in \cite{GLJ21}.

In this paper we introduce the notion of independent spaces and bases for $q$-polymatroids.
As it turns out, the `standard notion' of independence, namely the equality of rank value and dimension, is too restrictive for
$q$-polymatroids. 
For this reason we introduce a more general notion of independence, which is inspired by the analogue 
for classical polymatroids in  \cite[Sec.~11]{Ox11}. 
In order to derive properties of the independent spaces, we introduce an auxiliary $q$-matroid on the same ground space
(akin to a construction in  \cite{Ox11}). 
Since the independent spaces of the auxiliary $q$-matroid coincide with those of the $q$-polymatroid, the latter inherits all 
properties known for independent spaces of $q$-matroids --- as long as these properties do not involve the rank function.

The independent spaces naturally give rise to a notion of basis, namely the maximal-dimensional independent 
subspaces.
Despite the lack of rigidity of the rank function in a $q$-polymatroid, it turns out that all bases of a subspace have the same rank value and this value agrees with the rank value of the subspace. 
In other words, the rank function restricted to the independent spaces fully determines the $q$-polymatroid.
This result allows us to provide a cryptomorphism for $q$-polymatroids based on independent spaces: we
characterize the collections of spaces endowed with a rank function on these spaces that give rise to a $q$-polymatroid with exactly these spaces as independent spaces and whose rank function restricts to the given one.
Examples show that no such cryptomorphism is possible using only bases, dependent spaces, or  circuits. 

We finally turn to spanning spaces. These are the spaces that share the same rank value as the ground space. 
It turns out that in a $q$-polymatroid every minimal spanning space is contained in a basis, but is, in general, not a basis itself.
Thus the notions `minimal spanning' and `maximally independent'  do not agree.
On the plus side, `minimal spanning' is the dual notion to `maximally strongly independent'.
This simple fact may be regarded as the generalization of the duality result for bases in $q$-matroids.
The latter states that in a $q$-matroid a space is a basis if and only if its orthogonal is a basis of the dual $q$-matroid.
It turns out that this equivalence is never true in a proper $q$-polymatroid and therefore 
characterizes $q$-matroids.

\textbf{Notation:}
We fix a finite field $\F=\F_q$ with~$q$ elements and a finite-dimensional $\F$-vector space~$E$.
We write $V\leq E$ if~$V$ is a subspace of~$E$ and denote by $\cV(E)$ the collection of all subspaces of~$E$.
The standard basis vectors in $\F^n$ are denoted by $e_1,\ldots,e_n$.
We write $[n]$ for the set $\{1,\ldots,n\}$.

\section{Basic Notions of $q$-Polymatroids}\label{S-Prelims}
In this section we define $q$-polymatroids and present some basic properties. We
 also introduce the main class of examples, namely $q$-polymatroids induced by rank-metric codes. 
The section is based on the material in \cite{GLJ21}.

\begin{defi}\label{D-PMatroid}
Set $\cV=\cV(E)$.
A \emph{$q$-rank function} on~$E$ is a map  $\rho: \cV\longrightarrow\Q_{\geq0}$  satisfying:
\begin{mylist2}
\item[(R1)\hfill] Dimension-Boundedness: $0\leq\rho(V)\leq \dim V$  for all $V\in\cV$;
\item[(R2)\hfill] Monotonicity: $V\leq W\Longrightarrow \rho(V)\leq \rho(W)$  for all $V,W\in\cV$;
\item[(R3)\hfill] Submodularity: $\rho(V+W)+\rho(V\cap W)\leq \rho(V)+\rho(W)$ for all $V,W\in\cV$.
\end{mylist2}
A \emph{$q$-polymatroid (\qPM) on~$E$} is a pair $\cM=(E,\rho)$, where $\rho: \cV\longrightarrow\Q_{\geq0}$ is a 
$q$-rank function.
The value $\rho(E)$ is called the \emph{rank} of the \qPM.
A number $\mu\in\Q_{>0}$ is called a \emph{denominator} of~$\rho$ (and $\cM$) if 
 $\mu\rho(V)\in\N_0$ for all $V\in\cV$.
In that case we call the map $\tau_{\mu}:=\mu\rho$ the \emph{induced integer $\rho$-function} w.r.t.~$\mu$.
The smallest denominator is called the \emph{principal denominator}.
A \qPM{} with principal denominator~$1$ (i.e., $\rho(V)\in\N_0$ for all~$V$) is called a \emph{$q$-matroid}.
If~$(E,\rho)$ is the \emph{trivial} \qPM, i.e.,~$\rho$ is the zero map, we declare~$1$ to be its principal denominator.
\end{defi}

We will often make use of the induced integer $\rho$-function $\tau_{\mu}$ for a given denominator~$\mu$.
Clearly~$\tau_\mu$ is also monotonic and submodular, and instead of (R1) it satisfies $0\leq\tau_\mu(V)\leq\mu\dim V$ for all $V\in\cV$.

The above definition appears in various forms in the literature. 
In \cite[Def.~4.1]{GJLR19} of Gorla et al.\ the same definition occurs with the only difference that the rank function may 
assume arbitrary real numbers. 
Next, a $q$-matroid in the sense of Jurrius/ Pellikaan~\cite{JuPe18} is exactly a $q$-matroid as defined above. 
Finally, for any $r\in\N$ a $(q,r)$-polymatroid as in \cite[Def.~2]{Shi19} by Shiromoto and \cite[Def.~1]{GhJo20} by Ghorpade/Johnson 
and \cite[Def.~1]{BCIJ21} by Byrne et al.\ can be turned into a \qPM{} with denominator~$r$ by dividing the rank function by~$r$. 
Conversely, given a \qPM{} $(E,\rho)$ with denominator~$\mu$, then $(E,\mu\rho)$ is a $(q,\lceil\mu\rceil)$-polymatroid in the sense of these papers.

It is possible for a \qPM{} that no nonzero subspace attains the upper bound in~(R1). 
In \cite{GLJ21} we call such \qPM{}s non-exact and discuss some basic facts about non-exact \qPM{}s.

\begin{rem}[\mbox{\cite[Rem.~2.3(3)]{GLJ21}}]\label{R-PrDenominator}
Let $(E,\rho)$ be a non-trivial \qPM{} with principal denominator~$\mu$.
Then the set of all denominators of $(E,\rho)$ is $\mu\N$.
\end{rem}

The following basic properties are presented for $q$-matroids in \cite[Prop.~6 and~7]{JuPe18}.
They hold true for \qPM{}s as well, and the proofs are identical to those in \cite{JuPe18}.

\begin{prop}\label{P-RankVx}
Let $(E,\rho)$ be a \qPM.
\begin{alphalist}
\item Let $V,W\in\cV(E)$.  Suppose $\rho(V+\subspace{x})=\rho(V)$ for all $x\in W$.
         Then $\rho(V+W)=\rho(V)$.
\item Let $V\in\cV(E)$ and $X,Y\in\cV(E)$ be $1$-dimensional spaces such that $\rho(V)=\rho(V+X)=\rho(V+Y)$.
        Then $\rho(V+X+Y)=\rho(V)$.
\end{alphalist}
\end{prop}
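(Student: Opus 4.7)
The plan is to deduce both claims from submodularity (R3) combined with monotonicity (R2), using the observation that $V$ sits inside the relevant intersections and therefore forces a lower bound on the intersection's rank.

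I would prove (b) first, as it is essentially a one-line submodularity argument. Apply (R3) to the pair $V+X$ and $V+Y$:
\[
\rho(V+X+Y)+\rho((V+X)\cap(V+Y))\leq \rho(V+X)+\rho(V+Y)=2\rho(V).
\]
Since $V\subseteq (V+X)\cap(V+Y)$, monotonicity gives $\rho((V+X)\cap(V+Y))\geq \rho(V)$. Subtracting yields $\rho(V+X+Y)\leq \rho(V)$, and the reverse inequality is immediate from monotonicity, so equality holds.

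For (a) I would induct on $k:=\dim W$. Fix a basis $w_1,\ldots,w_k$ of $W$ and set $W_j=\langle w_1,\ldots,w_j\rangle$ for $0\leq j\leq k$; the goal is to show $\rho(V+W_j)=\rho(V)$ for every $j$, which at $j=k$ is the desired conclusion. The cases $j=0,1$ are either trivial or given by hypothesis. For the inductive step, assume $\rho(V+W_j)=\rho(V)$ and set $U=V+W_j$. Applying (R3) to $U$ and $V+\langle w_{j+1}\rangle$ gives
\[
\rho(U+\langle w_{j+1}\rangle)+\rho\bigl(U\cap(V+\langle w_{j+1}\rangle)\bigr)\leq \rho(U)+\rho(V+\langle w_{j+1}\rangle)=2\rho(V),
\]
where on the left I used that $V\subseteq U$ implies $U+V+\langle w_{j+1}\rangle=U+\langle w_{j+1}\rangle$. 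Again $V$ is contained in the intersection, so the second term on the left is at least $\rho(V)$, forcing $\rho(U+\langle w_{j+1}\rangle)\leq \rho(V)$, and monotonicity closes the induction.

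There is no real obstacle here: the only slightly subtle point is recognizing that the hypothesis on one-dimensional extensions of $V$ is enough to drive the induction after replacing $V$ by the larger subspace $U=V+W_j$, which is possible precisely because $V\subseteq U\cap(V+\langle w_{j+1}\rangle)$ keeps the submodular error term bounded below by $\rho(V)$. Part (b) could alternatively be recovered as the special case $\dim W=2$ of (a), but proving it separately first makes the structure of the argument transparent.
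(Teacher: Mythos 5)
Your proof is correct, and it follows essentially the same route as the paper, which simply defers to the Jurrius--Pellikaan argument: submodularity plus monotonicity for the two one-dimensional extensions, and an induction on $\dim W$ (adding one basis vector of $W$ at a time, with $V$ inside the intersection bounding the error term) for part (a). Nothing further is needed.
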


In order to discuss duality as well as some details on independent spaces we need the following notions of equivalence.
They extend \cite[Def.~4.4]{GJLR19}.
\qPM{}s are scaling-equivalent if they differ only by a scalar factor of the rank functions and an isomorphism between the ground spaces, and they are equivalent if the scalar factor is~$1$.

\begin{defi}\label{D-EquivMatroid}
Let $E_i,\,i=1,2,$ be $\F$-vector spaces of the same finite dimension and let
$\cM_i=(E_i,\rho_i)$ be \qPM{}s.
\begin{alphalist}
\item $\cM_1$ and $\cM_2$ are called \emph{scaling-equivalent} if there exists an $\F$-isomorphism 
         $\alpha\in\Hom_{\F}(E_1,E_2)$  
         and $a\in\Q_{>0}$ such that $\rho_2(\alpha(V))=a\rho_1(V)$ for all $V\in\cV(E_1)$.
\item  We call $\cM_1$ and $\cM_2$ \emph{equivalent}, denoted by $\cM_1\approx\cM_2$, if there exists an $\F$-isomorphism 
         $\alpha\in\Hom_{\F}(E_1,E_2)$ such that
         $\rho_2(\alpha(V))=\rho_1(V)$ for all $V\in\cV(E_1)$.
\end{alphalist}
\end{defi}

Clearly, both types of equivalences are indeed equivalence relations.

\begin{theo}[\mbox{\cite[Thm.~2.8]{GLJ21} and \cite[4.5--4.7]{GJLR19}, and \cite[Thm.~42]{JuPe18} for $q$-matroids}]\label{T-DualqPM}
Let $\inner{\cdot}{\cdot}$ be a non-degenerate symmetric bilinear form on~$E$. 
For $V\in\cV(E)$ define $\mbox{$V^\perp=\{w\in E\mid \inner{v}{w}=0$}\text{ for all }v\in V\}$.
Let $\cM=(E,\rho)$ be a \qPM{} and set
\[
    \rho^*(V)=\dim V+\rho(V^\perp)-\rho(E).
\]
Then $\rho^*$ is a $q$-rank function on~$E$ and $\cM^*=(E,\rho^*)$ is a \qPM. It is called the \emph{dual} of~$\cM$ with respect to the
form $\inner{\cdot}{\cdot}$. 
Furthermore, the bidual $\cM^{**}=(\cM^*)^*$ satisfies $\cM^{**}=\cM$, and $\cM$ and $\cM^*$ have the same set of denominators.
Finally, the equivalence class of~$\cM^*$ does not depend on the choice of the bilinear form.
More precisely, if $\langle\!\inner{\cdot}{\cdot}\!\rangle$ is another non-degenerate symmetric bilinear form on~$E$ and 
$\cM^{\hat{*}}=(E,\rho^{\hat{*}})$ is the resulting dual \qPM{}, then $\cM^{\hat{*}}\approx\cM^*$.
\end{theo}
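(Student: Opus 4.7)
The strategy is to verify the axioms (R1)--(R3) for $\rho^*$, compute the bidual using $V^{\perp\perp}=V$, address the denominator claim from the defining formula for $\rho^*$, and finally reduce the case of a second bilinear form to an equivalence via an intertwining automorphism of~$E$.

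For (R1), the upper bound $\rho^*(V)\le\dim V$ is immediate from monotonicity $\rho(V^\perp)\le\rho(E)$. For the lower bound $\rho^*(V)\ge0$, I would first record the unit-increment inequality $\rho(W+\subspace{x})\le\rho(W)+1$, a quick consequence of (R1) and (R3). Iterating this over a basis of any complement of $V^\perp$ in~$E$ (which has dimension $\dim V$, since the form is non-degenerate) yields $\rho(E)\le\rho(V^\perp)+\dim V$, whence $\rho^*(V)\ge0$. For (R2), if $V\le W$ then $W^\perp\le V^\perp$ and the same unit-increment argument applied to extending $W^\perp$ to $V^\perp$ yields $\rho(V^\perp)-\rho(W^\perp)\le\dim W-\dim V$, which rearranges to $\rho^*(V)\le\rho^*(W)$. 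Submodularity (R3) for $\rho^*$ reduces directly to (R3) for $\rho$ applied at $V^\perp$ and $W^\perp$, using the identities $(V+W)^\perp=V^\perp\cap W^\perp$, $(V\cap W)^\perp=V^\perp+W^\perp$, and $\dim(V+W)+\dim(V\cap W)=\dim V+\dim W$.

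The bidual then follows from a short direct computation: since $V^{\perp\perp}=V$ and $\rho(\{0\})=0$, we have $\rho^*(V^\perp)=\dim V^\perp+\rho(V)-\rho(E)$ and $\rho^*(E)=\dim E-\rho(E)$, so $\rho^{**}(V)=\dim V+\dim V^\perp-\dim E+\rho(V)=\rho(V)$. For the denominators, the formula $\mu\rho^*(V)=\mu\dim V+\mu\rho(V^\perp)-\mu\rho(E)$ together with the nonnegativity of $\rho^*$ shows that any denominator of $\rho$ is also a denominator of $\rho^*$; the reverse inclusion follows from $\cM^{**}=\cM$.

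For the independence from the bilinear form, I use the standard fact that any two non-degenerate symmetric bilinear forms on~$E$ differ by precomposition with an $\F$-linear automorphism $\alpha$ of~$E$, so that the second form can be written as $(v,w)\mapsto\inner{\alpha(v)}{w}$. Taking perpendiculars then shows $V^{\hat{\perp}}=\alpha(V)^\perp$, and substituting into the definition of $\rho^{\hat{*}}$ yields $\rho^{\hat{*}}(V)=\dim\alpha(V)+\rho(\alpha(V)^\perp)-\rho(E)=\rho^*(\alpha(V))$, which is exactly the equivalence $\cM^{\hat{*}}\approx\cM^*$ via~$\alpha$. I expect the main subtlety to lie in the lower bound (R1) for $\rho^*$: unlike in the $q$-matroid case, $V+V^\perp$ need not equal~$E$ when the form restricted to $V$ is degenerate, so one cannot apply submodularity of $\rho$ directly at $V$ and~$V^\perp$. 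Passing to a transverse complement of $V^\perp$ and using the unit-increment inequality sidesteps this obstruction.
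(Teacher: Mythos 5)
Your verification of (R1)--(R3) for $\rho^*$, the bidual computation, and the treatment of the second bilinear form are all correct, and they follow the same standard route as the proofs in the sources the paper cites for this theorem (the paper itself gives no proof of \cref{T-DualqPM}). In particular, the unit-increment inequality $\rho(W+\subspace{x})\leq\rho(W)+1$ applied along a complement of $V^\perp$ (of dimension $\dim V$, by non-degeneracy) is exactly the right tool for the lower bound in (R1) and for (R2), and your closing remark correctly identifies why one cannot simply apply submodularity to $V$ and $V^\perp$: these need not sum to~$E$. The reduction of the second form to $\langle\!\inner{v}{w}\!\rangle=\inner{\alpha(v)}{w}$ for an automorphism~$\alpha$, giving $V^{\hat{\perp}}=\alpha(V)^\perp$ and hence $\rho^{\hat{*}}(V)=\rho^*(\alpha(V))$, is a complete proof of $\cM^{\hat{*}}\approx\cM^*$ in the sense of \cref{D-EquivMatroid}.

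The one step that does not go through as written is the denominator claim. Your computation $\mu\rho^*(V)=\mu\dim V+\mu\rho(V^\perp)-\mu\rho(E)$ shows $\mu\rho^*(V)\in\N_0$ only if $\mu\dim V\in\Z$, i.e.\ only if $\mu\in\N$; but \cref{D-PMatroid} allows denominators $\mu\in\Q_{>0}$. For non-integer rational denominators the step fails, and in fact so does the assertion itself: on $E=\F_q^2$ the map with $\rho(0)=0$ and $\rho(V)=\tfrac23$ for all $V\neq0$ is a $q$-rank function with denominator $\tfrac32$, while $\rho^*$ takes the value $1$ on one-dimensional spaces and $\tfrac43$ on~$E$, so $\tfrac32$ is not a denominator of $\rho^*$. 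Thus your argument (together with the bidual for the reverse inclusion) is complete precisely for integer denominators, which is evidently the intended reading of the cited result; you should either state that restriction explicitly or flag that the claim as formulated over $\Q_{>0}$ requires it. This caveat aside, the proposal is sound.
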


The next result has been proven in~\cite{GJLR19} for \qPM{}s on $\F^n$, endowed with the standard dot product.
Thanks to the  invariance of the dual, it generalizes without the need to specify bilinear forms.

\begin{prop}[\mbox{\cite[Prop.~4.7]{GJLR19}}]\label{P-EquivDual}
Let $\cM=(E,\rho)$ and $\hat{\cM}=(\hat{E},\hat{\rho})$ be \qPM{}s.
Then $\cM\approx\hat{\cM}$ implies $\cM^*\approx\hat{\cM}^*$. 
\end{prop}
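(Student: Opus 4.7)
The plan is to leverage the freedom in choosing the bilinear form granted by Theorem~\ref{T-DualqPM}. The equivalence $\cM\approx\hat{\cM}$ gives an $\F$-isomorphism $\alpha:E\to\hat{E}$ with $\hat{\rho}(\alpha(V))=\rho(V)$ for all $V\in\cV(E)$. If I fix any non-degenerate symmetric bilinear form $\inner{\cdot}{\cdot}$ on $E$, I can transport it to $\hat{E}$ by declaring
\[
\langle\!\inner{\hat{v}}{\hat{w}}\!\rangle := \inner{\alpha^{-1}(\hat{v})}{\alpha^{-1}(\hat{w})}\quad\text{for all }\hat{v},\hat{w}\in\hat{E}.
\]
Since $\alpha$ is an isomorphism, this form is again non-degenerate and symmetric. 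A short calculation shows that, with respect to these two matching forms, $\alpha$ commutes with taking orthogonals: $\alpha(V)^{\perp}=\alpha(V^{\perp})$ for every $V\leq E$.

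Next, I would use the transported form to compute the dual of $\hat{\cM}$. Denote by $\hat{\cM}^{\hat{*}}=(\hat{E},\hat{\rho}^{\hat{*}})$ the dual of $\hat{\cM}$ with respect to $\langle\!\inner{\cdot}{\cdot}\!\rangle$. Then for every $V\in\cV(E)$,
\[
\hat{\rho}^{\hat{*}}(\alpha(V))=\dim\alpha(V)+\hat{\rho}(\alpha(V)^{\perp})-\hat{\rho}(\hat{E})
=\dim V+\hat{\rho}(\alpha(V^{\perp}))-\hat{\rho}(\alpha(E))
=\dim V+\rho(V^{\perp})-\rho(E)=\rho^{*}(V),
\]
using in the middle step that $\alpha$ respects orthogonals and preserves dimensions, and the equivalence $\hat{\rho}\circ\alpha=\rho$. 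This identity is exactly the statement that $\cM^{*}\approx\hat{\cM}^{\hat{*}}$ via the isomorphism $\alpha$.

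Finally, Theorem~\ref{T-DualqPM} guarantees that the equivalence class of the dual is independent of the chosen non-degenerate symmetric bilinear form, so $\hat{\cM}^{\hat{*}}\approx\hat{\cM}^{*}$. Combining this with the previous step and using transitivity of $\approx$ yields $\cM^{*}\approx\hat{\cM}^{*}$. The only nontrivial point in the whole argument is the compatibility $\alpha(V)^{\perp}=\alpha(V^{\perp})$, which follows directly from the way the second form was constructed; everything else is a straightforward unwinding of definitions.
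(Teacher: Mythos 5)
Your proof is correct. Note that the paper itself gives no argument for this proposition: it simply cites the special case proved by Gorla et al.\ for $\F^n$ with the standard dot product and remarks that the general statement follows from the invariance of the dual's equivalence class under change of bilinear form (the last part of Theorem~\ref{T-DualqPM}). Your argument makes exactly that remark precise, and in fact does a bit more: by transporting the form on $E$ to $\hat{E}$ along $\alpha$, you get $\alpha(V)^\perp=\alpha(V^\perp)$ and hence a direct, self-contained verification that $\rho^*=\hat{\rho}^{\hat{*}}\circ\alpha$, so you never need the cited special case at all; the only external input is the form-independence from Theorem~\ref{T-DualqPM}, which you invoke correctly to pass from $\hat{\cM}^{\hat{*}}$ to $\hat{\cM}^*$ (and which also disposes of the implicit dependence of $\cM^*$ on the form chosen on $E$). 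All steps check out, including the non-degeneracy and symmetry of the transported form and the use of $\alpha(E)=\hat{E}$ in the middle of the displayed computation.
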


\begin{exa}[\mbox{\cite[Ex.~4 and Ex.~47]{JuPe18}}]\label{E-DualUnif}
Let $\cU_{k}(E)=(E,\rho)$ be the \emph{uniform $q$-matroid of rank $k$}, that is, $\rho(V)=\min\{k,\dim V\}$ for all $V\in\cV(E)$.
Then $\cU_{k}(E)^*=\cU_{\dim E-k}(E)$.
\end{exa}

The rest of this section is devoted to \qPM{}s induced by rank-metric codes. 
This will provide us with a large class of \qPM{}s.
We start with collecting some basic properties of codes in $\Fnm$.
As usual, we endow $\Fnm$ with the rank-metric, defined as $\dd(A,B)=\rk(A-B)$.

We only consider \emph{linear rank-metric codes}, that is, subspaces of the metric space $(\Fnm,\dd)$.
The following is standard knowledge in the theory of rank-metric codes.
For $V\leq\F^n$ denote by $V^\perp\leq\F^n$ the orthogonal space with respect to the standard dot product.

\begin{rem}\label{D-RMCBasics}
Let  $\cC\leq \Fnm$ be a rank-metric code. 
\begin{alphalist}
\item The \emph{rank distance} of~$\cC$ is defined as $\drk(\cC)=\min\{\rk(M)\mid M\in\cC\setminus0\}$.
If $d=\drk(\cC)$, then $\dim(\cC)\leq \max\{m,n\}(\min\{m,n\}-d+1)$, which is known as the \emph{Singleton bound}.
        If  $\dim(\cC)=\max\{m,n\}(\min\{m,n\}-d+1)$, then $\cC$ is called an \emph{MRD code}.
\item The \emph{dual code} of~$\cC$ is defined as $\cC^\perp=\{M\in\Fnm\mid \tr(MN\T)=0\text{ for all }N\in\cC\}$, where 
        $\tr(\cdot)$ denotes the trace of the given matrix.
\item For $V\in\cV(\F^n)$ we set $\cC(V,\cc)=\{M\in\cC\mid \colsp(M)\leq V\}$,
where $\colsp(M)$ denotes the column space of~$M$.
Then $ \Fnm(V,\cc)^\perp=\Fnm(V^\perp,\cc)$ 
and~\cite[Lem.~28]{Ra16a}
\begin{align*}
  \dim\cC(V^\perp,\cc)&=\dim\cC-m\dim V+\dim\cC^\perp(V,\cc).
\end{align*}
\end{alphalist}
\end{rem}

The following  \qPM{}s induced by rank-metric codes appeared first in~\cite{GJLR19}. 
The statement in \eqref{e-rhoV} is immediate with Proposition~\ref{D-RMCBasics}(c).

\begin{defiprop}[\mbox{\cite[Thm.~5.3]{GJLR19}}]\label{P-RMCMatroid}
For a  nonzero rank-metric code $\cC\leq\F^{n\times m}$ define 
\[
   \rho_\cc:\cV(\F^n)\longrightarrow \Q_{\geq0},\quad V\longmapsto \frac{\dim\cC-\dim\cC(V^\perp,\cc)}{m}.
\]
Then $\rho_\cc$ is a $q$-rank function with denominator~$m$. 
The $q$-polymatroid $\cM_\cc(\cC):=(\F^n,\rho_\cc)$ is called the 
\emph{(column) polymatroid} of~$\cC$.
Its rank is $\dim\cC/m$.
The rank function satisfies
\begin{equation}\label{e-rhoV}
   \rho_\cc(V)=\dim V-\frac{1}{m}\dim\cC^\perp(V,\cc).
\end{equation}
\end{defiprop}

Analogously we can define the row polymatroid of the rank-metric code, which then has denominator~$n$.
It is of course the same as the column polymatroid of the transposed code, and thus it suffices to consider 
column polymatroids.

The denominator~$m$ is in general not principal; see for instance~(a) of \cref{P-MRDCodes} below.

The rank distances of a code and its dual are closely related to the \qPM. The following is immediate with Proposition~\ref{P-RMCMatroid} and 
also  appeared already in \cite[Prop.~6.2]{GJLR19}, \cite[Lem.~30]{JuPe18}, and \cite[Rem.~3.8]{GLJ21}.

\begin{rem}\label{R-ddperp}
Let $\cC\leq\F^{n\times m}$ be a nonzero rank-metric code with rank-distance~$d$ and let $d^\perp$ be the rank distance of 
$\cC^\perp$. Then for any $V\in\cV(\F^n)$ we have
\[
   \rho_\cc(V)=\begin{cases}\dim V,&\text{ if }\dim V< d^\perp,\\[1ex]
        \frac{\dim \cC}{m},&\text{ if }\dim V>n-d.\end{cases}
\]
\end{rem}

For MRD codes we can give more detailed information about the associated \qPM{}s.

\begin{prop}[\mbox{\cite[Cor.~6.6]{GJLR19}, \cite[Thm.~3.12]{GLJ21}}]\label{P-MRDCodes}
Let $\cC\leq\Fnm$ be an MRD code with $\drk(\cC)=d$. 
\begin{alphalist}
\item Let $n\leq m$. Then $\cM_\cc(\cC)=\cU_{n-d+1}(\F^n)$, that is, $\cM_\cc(\cC)$ is the uniform $q$-matroid of rank $n-d+1$.
         Thus $\cM_\cc(\cC)$ depends only on $(n,d,|\F|)$.
\item Let $n\geq m$. Then $\cM_\cc(\cC)$ satisfies
         \[
             \rho_\cc(V)=
             \left\{\begin{array}{cl} \dim V,&\text{if }\dim V\leq m-d+1,\\[.5ex] \frac{n(m-d+1)}{m},&\text{if }\dim V\geq n-d+1,
             \end{array}\right.
         \]
         and $\rho_\cc(V)\geq \max\{1,\,(\dim V)/m\}(m-d+1)$ if $\dim V\in[m-d+2,\,n-d]$.
         Thus, if $m=n-1$, then $\cM_\cc(\cC)$ depends only on $(n,d,|\F|)$.
\end{alphalist}
\end{prop}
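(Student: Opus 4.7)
The strategy is to exploit the fact that the dual of an MRD code is again MRD with a known rank distance, use \cref{R-ddperp} to pin down $\rho_\cc$ in the extreme dimension ranges, and handle the intermediate range in part~(b) via a Singleton-bound argument on the restricted dual code $\cC^\perp(V,\cc)$.

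As a preliminary step I would invoke the well-known Delsarte duality for MRD codes: $\cC^\perp$ is also MRD with $\drk(\cC^\perp)=\min\{n,m\}-d+2$ (this can be extracted from the dimension identity $\dim\cC+\dim\cC^\perp=nm$ together with the Singleton bound). Thus $d^\perp=n-d+2$ in part~(a) and $d^\perp=m-d+2$ in part~(b). For part~(a), \cref{R-ddperp} gives $\rho_\cc(V)=\dim V$ whenever $\dim V\leq n-d+1$ (since $d^\perp=n-d+2$) and $\rho_\cc(V)=\dim\cC/m=n-d+1$ whenever $\dim V\geq n-d+1$. These two ranges cover $\cV(\F^n)$ and agree on their overlap at $\dim V=n-d+1$, so $\rho_\cc(V)=\min\{\dim V,\,n-d+1\}$, which is precisely $\cU_{n-d+1}(\F^n)$.

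For part~(b), \cref{R-ddperp} again yields the stated values when $\dim V\leq m-d+1$ or $\dim V\geq n-d+1$. For the intermediate range $\dim V=k\in[m-d+2,\,n-d]$, I would use the formula $\rho_\cc(V)=k-\frac{1}{m}\dim\cC^\perp(V,\cc)$ from~\eqref{e-rhoV} and bound $\dim\cC^\perp(V,\cc)$ from above. After a change of basis on~$\F^n$ that sends $V$ to the span of the first $k$ standard basis vectors, matrices in $\Fnm$ with column space contained in~$V$ correspond to $k\times m$ matrices, and this identification preserves rank. Hence $\cC^\perp(V,\cc)$ is a rank-metric code in $\F^{k\times m}$ with rank distance at least $d^\perp=m-d+2$, and the Singleton bound gives $\dim\cC^\perp(V,\cc)\leq\max\{k,m\}(\min\{k,m\}-d^\perp+1)$. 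Substituting into the formula for $\rho_\cc(V)$ and splitting the cases $k\leq m$ (where the bound becomes $m(k-m+d-1)$, yielding $\rho_\cc(V)\geq m-d+1$) and $k\geq m$ (where the bound becomes $k(d-1)$, yielding $\rho_\cc(V)\geq k(m-d+1)/m$) produces the claimed lower bound $\rho_\cc(V)\geq\max\{1,\,k/m\}(m-d+1)$.

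Finally, when $m=n-1$ the intermediate range $[m-d+2,\,n-d]=[n-d+1,\,n-d]$ is empty, so $\rho_\cc$ is completely determined by the two boundary formulas, both of which depend only on $(n,d,|\F|)$. The main technical point I anticipate is the intermediate-range step: one must carefully check that the column-space-restriction operation is a rank-metric isometry, so that the dual rank distance $m-d+2$ genuinely transfers to $\cC^\perp(V,\cc)$, and then keep track of the two Singleton sub-bounds in $k\leq m$ versus $k\geq m$ to obtain the $\max\{1,\,k/m\}$ factor in the lower bound.
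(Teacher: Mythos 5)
The paper itself does not prove this proposition; it quotes it from \cite[Cor.~6.6]{GJLR19} and \cite[Thm.~3.12]{GLJ21}, so your argument can only be judged on its own merits, and on those merits it is essentially correct and follows the standard route: dual distance via MRD duality, \cref{R-ddperp} for the two extreme ranges, and formula~\eqref{e-rhoV} plus a Singleton bound on the restricted code $\cC^\perp(V,\cc)$ for the intermediate range. The case split $k\leq m$ versus $k\geq m$ does yield exactly $\rho_\cc(V)\geq\max\{1,\,k/m\}(m-d+1)$, the rank-preserving identification of matrices with column space in $V$ with $k\times m$ matrices is fine (left multiplication by an invertible matrix taking $V$ to $\subspace{e_1,\ldots,e_k}$), and the observation that $[m-d+2,\,n-d]$ is empty when $m=n-1$ correctly finishes part (b).

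The one point you should repair is the parenthetical justification of the duality statement. The dimension identity $\dim\cC+\dim\cC^\perp=nm$ together with the Singleton bound applied to $\cC^\perp$ only gives the inequality $d^\perp\leq\min\{n,m\}-d+2$: knowing $\dim\cC^\perp=\max\{n,m\}(d-1)$ bounds $d^\perp$ from above, not from below. But every step of your proof that uses $d^\perp$ (the range $\dim V<d^\perp$ in \cref{R-ddperp} for both parts, and the lower bound $m-d+2$ on the rank distance of the restricted code $\cC^\perp(V,\cc)$) needs the opposite inequality $d^\perp\geq\min\{n,m\}-d+2$, i.e.\ the genuinely nontrivial theorem that the dual of an MRD code is again MRD (Delsarte; see also \cite{Ra16a}). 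Since that result is standard, the fix is simply to cite it rather than claim it is extracted from the dimension count; with that citation in place your argument is complete.
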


Equivalence of codes, in the usual sense, translates into equivalence of the associated \qPM{}s.

\begin{prop}[\mbox{\cite[Prop.~3.5]{GLJ21} and \cite[Prop.~6.7]{GJLR19}}]\label{P-EquivCodeMatroid}
Let  $\cC,\,\cC'\leq\Fnm$ be rank-metric codes.
\begin{alphalist}
\item Suppose $\cC,\,\cC'$ are \emph{equivalent}, i.e., $\cC'=X\cC Y:=\{XMY\mid M\in\cC\}$ for some 
        $X\in\GL_n(\F)$ and $Y\in\GL_m(\F)$.
       Then $\cM_\cc(\cC)$ and $\cM_\cc(\cC')$ are equivalent via $\beta\in\Hom_{\F}(\F^n,\F^n)$
        given by $x\mapsto (X\T)^{-1}x$.
\item Let $n=m$ and suppose $\cC,\,\cC'$ are \emph{transposition-equivalent}, that is, $\cC'=X\cC\T Y$ for some
       $X,Y\in\GL_n(\F)$, and where $\cC\T=\{M\T\mid M\in\cC\}$.
         Then $\cM_\cc(\cC\T)$ and $\cM_\cc(\cC')$ are equivalent via~$\beta$, where~$\beta$ is as in~(a).
\end{alphalist}
\end{prop}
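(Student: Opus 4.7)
The plan is to work directly with the formula $\rho_\cc(V)=(\dim\cC-\dim\cC(V^\perp,\cc))/m$ from \cref{P-RMCMatroid}. Since $\cC'=X\cC Y$ with invertible $X,Y$, we have $\dim\cC'=\dim\cC$, so part~(a) reduces to proving the identity
\begin{equation*}
   \dim\cC'(\beta(V)^\perp,\cc)=\dim\cC(V^\perp,\cc)\quad\text{for all }V\in\cV(\F^n),
\end{equation*}
where $\beta(x)=(X\T)^{-1}x$.

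The first step is to transport the column-space filtration across the equivalence. For any $W\leq\F^n$, I claim that the assignment $M\mapsto XMY$ is an $\F$-linear bijection from $\cC(X^{-1}W,\cc)$ onto $\cC'(W,\cc)$: right multiplication by the invertible matrix~$Y$ does not alter the column space, and $\colsp(XM)=X\colsp(M)$, so that $\colsp(XMY)\leq W$ iff $\colsp(M)\leq X^{-1}W$. In particular $\dim\cC'(W,\cc)=\dim\cC(X^{-1}W,\cc)$ for every $W\leq\F^n$.

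The second step is to identify $\beta(V)^\perp$. Under the standard dot product on $\F^n$ one has $w\perp (X\T)^{-1}v$ iff $X^{-1}w\perp v$, which yields $\beta(V)^\perp=XV^\perp$. Combining the two steps,
\begin{equation*}
   \dim\cC'(\beta(V)^\perp,\cc)=\dim\cC(X^{-1}XV^\perp,\cc)=\dim\cC(V^\perp,\cc),
\end{equation*}
and therefore $\rho_\cc'(\beta(V))=\rho_\cc(V)$, which proves~(a). For part~(b), observe that $\cC'=X\cC\T Y$ is exactly an equivalence of the form treated in~(a) between $\cC\T$ and $\cC'$; applying~(a) with $\cC\T$ in place of $\cC$ delivers the required equivalence $\cM_\cc(\cC\T)\approx\cM_\cc(\cC')$ via the same $\beta$. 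I do not foresee any serious difficulty here. The only point that needs to be checked carefully is the duality identity $((X\T)^{-1}V)^\perp=XV^\perp$, since it is precisely this identity that allows the dualized rank formula of \cref{P-RMCMatroid} to be transported correctly along~$\beta$.
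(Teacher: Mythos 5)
Your argument is correct and complete: the bijection $M\mapsto XMY$ from $\cC(X^{-1}W,\cc)$ onto $\cC'(W,\cc)$ together with the identity $\bigl((X\T)^{-1}V\bigr)^\perp=XV^\perp$ (which you verify) gives $\rho_\cc'(\beta(V))=\rho_\cc(V)$ directly from the formula in \cref{P-RMCMatroid}, and part~(b) is indeed just~(a) applied to $\cC\T$. The paper itself cites this result from \cite{GLJ21,GJLR19} without reproducing a proof, and your computation is the standard argument used there.
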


Occasionally, we will consider $\F_{q^m}$-linear rank-metric codes, which we introduce as follows.
Recall that $\F=\F_q$.
Let~$\omega$ be a primitive element of the field extension~$\F_{q^m}$ and $\psi:\F_{q^m}\longrightarrow\F_q^m$ be the coordinate map with respect to the basis $(1,\omega,\ldots,\omega^{m-1})$.
Extending~$\psi$ entry-wise, we obtain, for any~$n$, an isomorphism 
$\Psi:\F_{q^m}^n\longrightarrow\F_q^{n\times m}$ that maps
$(c_1,\,\cdots,\, c_n)$ to the matrix with rows $\psi(c_1),\, \ldots,\, \psi(c_n)$.
Let  $f=x^m-\sum_{i=0}^{m-1}f_ix^i\in\F[x]$ be the minimal polynomial of~$\omega$ over~$\F$ and
\begin{equation}\label{e-Deltaf}
    \Delta_f=
    \begin{pmatrix} &1& & \\ & &\ddots & \\ & & & 1\\ f_0&f_1&\cdots&f_{m-1}\end{pmatrix}\in\GL_m(\F_q)
\end{equation}
be the companion matrix of~$f$.

Given a code $\cC\leq\Fnm$, we can now describe linearity of the code $\Psi^{-1}(\cC)\subseteq\F_{q^m}^n$ over $\F_{q^m}$ or a subfield thereof as follows; see also~\cite[Sec.~3]{GLJ21}.
The matrix~$X$ below simply allows for arbitrary bases of $\F_{q^m}$ over~$\F_q$.

\begin{defi}\label{D-Fqn-linear}
Let $\cC\leq\F_q^{n\times m}$ be a rank-metric code (hence an $\F_q$-linear subspace).
Let $s$ be a divisor of~$m$ and set $M=(q^m-1)/(q^s-1)$.
Then $\cC$ is \emph{right $\F_{q^s}$-linear} if there exists an $X\in\GL_m(\F_q)$ such that
the code $\cC X$ is invariant under right multiplication by~$\Delta_f^M$.
\emph{Left linearity} over $\F_{q^n}$ and its subfields is defined analogously.
\end{defi}

Obviously, the qualifiers left/right are needed only in the case where $\F_{q^s}$ is a subfield of both $\F_{q^n}$ and $\F_{q^m}$. 
The following is now easy to verify; see also~\cite[Sec.~3]{GLJ21}.

\begin{rem}\label{R-rhoFqnlinear}
Let~$\F_{q^s}$ be a subfield of $\F_{q^m}$ and $\cC\leq\F_q^{n\times m}$  be a right $\F_{q^s}$-linear rank-metric code.
Then $\mu=m/s$ is a denominator of the column polymatroid $\cM_\cc(\cC)$.
In particular, for $s=m$ the  poly\-matroid $\cM_\cc(\cC)$ is a $q$-matroid.
These are exactly the $q$-matroids studied in \cite{JuPe18}.
\end{rem}

It should be noted that $\cM_\cc(\cC)$ may be a $q$-matroid even if~$\cC$ is not right 
$\F_{q^m}$-linear. 
Indeed, in \cref{P-MRDCodes}(a) we saw already that the polymatroid associated to an MRD code in $\Fnm$ is a $q$-matroid
if $n\leq m$.
Furthermore, \cite[Thm.~5.5]{GLJ21} shows that if $\cC$ induces a $q$-matroid, then so does every shortening and puncturing of~$\cC$.

Duality of \qPM{}s (see \cref{T-DualqPM}) corresponds to duality of codes.

\begin{theo}[\mbox{\cite[Thm.~8.1]{GJLR19}}]\label{T-TraceDual}
Let $\cC\leq\F^{n\times m}$ be a rank-metric code and $\cC^\perp\leq\F^{n\times m}$ be its dual.
Then $\cM_\cc(\cC)^*=\cM_\cc(\cC^\perp)$, where $\cM_\cc(\cC)^*$ is the dual of $\cM_\cc(\cC)$ w.r.t.\ the standard dot product on~$\F^n$.
\end{theo}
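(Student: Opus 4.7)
The plan is to unfold both sides of the claimed equality into explicit formulas in terms of $\dim V$, $\dim\cC$, and $\dim\cC(V,\cc)$, and then observe they match. The whole argument is a short calculation combining the definition of the dual rank function in \cref{T-DualqPM} with the two equivalent expressions for $\rho_\cc$ recorded in \cref{P-RMCMatroid}.

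First I would expand $\rho_\cc^*(V)=\dim V+\rho_\cc(V^\perp)-\rho_\cc(\F^n)$ using the defining formula
\[
   \rho_\cc(V^\perp)=\frac{\dim\cC-\dim\cC((V^\perp)^\perp,\cc)}{m}=\frac{\dim\cC-\dim\cC(V,\cc)}{m}
\]
together with $\rho_\cc(\F^n)=\dim\cC/m$. The two occurrences of $\dim\cC$ cancel, leaving
\[
   \rho_\cc^*(V)=\dim V-\frac{1}{m}\dim\cC(V,\cc).
\]

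Next I would apply the alternate expression \eqref{e-rhoV} of \cref{P-RMCMatroid} to the dual code $\cC^\perp$ rather than to~$\cC$. Denoting its column rank function by $\rho_\cc'$, this yields
\[
  \rho_\cc'(V)=\dim V-\frac{1}{m}\dim (\cC^\perp)^\perp(V,\cc)=\dim V-\frac{1}{m}\dim\cC(V,\cc),
\]
where I use that the trace form on $\Fnm$ is non-degenerate, so $(\cC^\perp)^\perp=\cC$. Comparing this with the previous display gives $\rho_\cc^*(V)=\rho_\cc'(V)$ for every $V\in\cV(\F^n)$, hence $\cM_\cc(\cC)^*=\cM_\cc(\cC^\perp)$.

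There is no conceptual obstacle: the content is packed into the identity \eqref{e-rhoV}, which itself rests on the dimension formula $\dim\cC(V^\perp,\cc)=\dim\cC-m\dim V+\dim\cC^\perp(V,\cc)$ recalled in \cref{D-RMCBasics}(c). Once this is available, the verification reduces to a formal cancellation; the only point requiring care is to invoke \eqref{e-rhoV} with the roles of $\cC$ and $\cC^\perp$ interchanged so that the term $(\cC^\perp)^\perp(V,\cc)$ collapses back to $\cC(V,\cc)$.
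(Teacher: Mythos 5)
Your calculation is correct: expanding $\rho_\cc^*(V)=\dim V+\rho_\cc(V^\perp)-\rho_\cc(\F^n)$ with the defining formula and $(V^\perp)^\perp=V$, and comparing with \eqref{e-rhoV} applied to $\cC^\perp$ (using $(\cC^\perp)^\perp=\cC$ under the non-degenerate trace form), gives exactly $\rho_\cc^*(V)=\dim V-\frac{1}{m}\dim\cC(V,\cc)=\rho_\cc'(V)$ for all $V$. The paper itself does not prove this statement but imports it from \cite{GJLR19}; your argument is the standard one, with all the substance residing in the dimension formula of \cref{D-RMCBasics}(c) from \cite{Ra16a}, so there is nothing to object to beyond the harmless implicit assumption that both $\cC$ and $\cC^\perp$ are nonzero so that \cref{P-RMCMatroid} applies to each.
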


Another instance of the interplay between polymatroid duality and rank-metric codes has been presented in 
\cite[Thms.~5.3 and 5.5]{GLJ21}.
Therein,  it is shown that contraction and deletion of \qPM{}s are mutually dual and 
correspond to shortening and puncturing of rank-metric codes.

We close this section with the following example of two MRD codes whose associated column polymatroids are not equivalent.
The parameters are as in \cref{P-MRDCodes}(b), where the interval $[m-d+2,\,n-d]$ is not empty.
We will return to this example later when discussing independent spaces.

\begin{exa}[\mbox{\cite[Ex.~3.15]{GLJ21}}]\label{E-RowMatMRD}
In $\F_2^{5\times 2}$ consider the codes $\cC_1=\subspace{A_1,\dots,A_5}$ and $\cC_2=\subspace{B_1,\ldots,B_5}$, where
\begin{align*}
   &A_1=\begin{pmatrix}1&1\\1&0\\0&0\\1&0\\0&0\end{pmatrix},\
    A_2=\begin{pmatrix}1&1\\1&1\\1&0\\0&1\\0&0\end{pmatrix},\
    A_3=\begin{pmatrix}0&0\\0&0\\1&1\\0&0\\0&1\end{pmatrix},\
    A_4=\begin{pmatrix}0&0\\0&1\\0&0\\0&0\\1&1\end{pmatrix},\
    A_5=\begin{pmatrix}1&0\\0&1\\1&1\\0&0\\0&1\end{pmatrix},\\[2ex]
   &B_1=\begin{pmatrix}1&0\\0&1\\0&0\\0&0\\0&0\end{pmatrix},\
     B_2=\begin{pmatrix}0&0\\1&0\\0&1\\0&0\\0&0\end{pmatrix},\
     B_3=\begin{pmatrix}0&0\\0&0\\1&0\\0&1\\0&0\end{pmatrix},\
     B_4=\begin{pmatrix}0&0\\0&0\\0&0\\1&0\\0&1\end{pmatrix},\
     B_5=\begin{pmatrix}0&1\\0&0\\0&1\\0&0\\1&0\end{pmatrix}.
\end{align*}
Both codes are MRD with rank distance~$d=2$, and~$\cC_2$ is actually a ($\F_{2^5}$-linear) Gabidulin code.
Consider the \qPM{}s $\cM_\cc(\cC_1)=(\F^5,\rho_\cc^1)$ and $\cM_\cc(\cC_2)=(\F^5,\rho_\cc^2)$.
From \cref{P-MRDCodes}(b) we know that $\rho_\cc^1(V)=\rho_\cc^2(V)=\dim V$ for $\dim V\leq 1$ and 
$\rho_\cc^1(V)=\rho_\cc^2(V)=5/2$ if $\dim V\geq 4$.
As for the $2$-dimensional subspaces of~$\F_2^5$, it turns out that the map $\rho_\cc^1$ assumes the value~$1$ exactly once and the values 
$3/2$ and $2$ exactly 28 and 126 times, respectively, whereas~$\rho_\cc^2$ assumes the values~$3/2$ and~$2$ exactly 31 and 124 times, respectively, and never takes the value~$1$.
Similar differences occur for the $3$-dimensional subspaces.
Thus $\cM_\cc(\cC_1)$ and $\cM_\cc(\cC_2)$ are not equivalent.
\end{exa}

\section{Independent Spaces} \label{S-Indep}
We now return to general \qPM{}s and introduce independent spaces. 
We show that the collection of independent spaces satisfies properties analogous to those of independent spaces in 
$q$-matroids. 
However, different from the latter, they do not fully determine the \qPM.
Only if we also take the rank values of the independent spaces into account, can we fully recover the \qPM. 
This will be dealt with in the next section.

Considering the theory of classical matroids and $q$-matroids, one may be inclined to declare a space~$V$ in a \qPM{} $(E,\rho)$
 independent if $\rho(V)=\dim V$.
While this is indeed the right notion for $q$-matroids, it turns out to be too restrictive for \qPM{}s:
in many \qPM{}s the only subspace satisfying  $\rho(V)=\dim V$  is the zero space.
Nonetheless, the property $\rho(V)=\dim V$ turns out to play a conceptual role (see also~\cite{BCIJ21}), and we will 
return to it in Section~\ref{S-SpSp}, where we will call such spaces strongly independent.

The following definition of independence is inspired by \cite[Cor.~11.1.2]{Ox11}, which deals with classical polymatroids.

\begin{defi}\label{D-Indep}
Let $\cM=(E,\rho)$ be a \qPM{} with denominator~$\mu$ (which need not be principal). 
A space $I\in\cV(E)$ is called \emph{$\mu$-independent} if 
\[
     \rho(J)\geq \frac{\dim J}{\mu} \text{ for all subspaces }J\leq I.
\]
$I$ is called \emph{$\mu$-dependent} if it is not $\mu$-independent.
A \emph{$\mu$-circuit} is a $\mu$-dependent space for which all proper subspaces are $\mu$-independent.
A $1$-dimensional $\mu$-dependent space is called a \emph{$\mu$-loop}.
We define $\cI_\mu=\cI_\mu(\cM)=\{I\in\cV(E)\mid I\text{ is $\mu$-independent}\}$.
If~$\mu$ is the principal denominator of~$\cM$, we may skip the quantifier~$\mu$ and simply use independent, 
dependent, loop, circuit, and~$\cI$.
\end{defi}

Clearly, if $\hat{\mu}$ is the principal denominator of~$\cM$, then $\hat{\mu}$-independence implies $\mu$-independence for any 
denominator~$\mu$ of~$\cM$; see \cref{R-PrDenominator}.

\begin{rem}\label{R-IndepMatroid}
Let  $(E,\rho)$ be a \qPM{}. Then for all $V\in\cV(E)$
\begin{equation}\label{e-rhoVdimV}
     \rho(V)=\dim V\Longrightarrow \rho(W)=\dim W\text{ for all }W\leq V.
\end{equation}
Indeed, writing $V=W\oplus Z$ for some complement~$Z$ of~$W$, we obtain from 
submodularity
$\dim V=\rho(V)\leq\rho(W)+\rho(Z)\leq\dim W+\dim Z=\dim V$, and thus we have equality everywhere.
As a consequence, the condition $\rho(V)=\dim V$ implies $\mu$-independence for every denominator~$\mu$, and 
for $q$-matroids our notion of independence (which is $1$-independence) coincides with independence as 
defined in \cite[Def.~2]{JuPe18}, that is: 
$V \text{ is $1$-independent }\Longleftrightarrow \rho(V)=\dim V$.
\end{rem}

We continue with discussing basis properties of independent spaces. 
A crucial difference to $q$-matroids is the following:
While for $q$-matroids a space is independent iff its rank value assumes the maximal possible value, this
is not the case for \qPM{}s.
More generally, for \qPM{}s independence is not characterized by the rank value of the given space; see the examples below.
In this context we also would like to point out that the inequality $\rho(I)\geq\dim I/\mu$ is not preserved under taking subspaces, 
which is why the condition for subspaces is built into our definition.

\begin{exa}\label{E-IndSpacesRanks}
\begin{alphalist}
\item Let $\F=\F_2$ and consider the code $\cC\leq\F^{3\times 3}$ generated by
        \[
          \begin{pmatrix}0&1&0\\0&0&1\\0&0&1\end{pmatrix},\   
          \begin{pmatrix}0&1&1\\0&0&0\\0&0&1\end{pmatrix},\ 
           \begin{pmatrix}0&1&1\\1&0&0\\0&1&0\end{pmatrix}.
        \]
        Let $\cM=\cM_\cc(\cC)=(\F^3,\rho_\cc)$ be the associated column polymatroid.
        Then for all $V\in\cV(\F^3)\setminus0$
        \[
            \rho_\cc(V)=\left\{\begin{array}{cl} 2/3 &\text{ if $\dim V=1$ or $V=\subspace{e_1+e_2,\,e_3}$,}\\[.4ex]
                                                                   1 &\text{ otherwise.}\end{array}\right.
        \]
        This shows that $3$ is the principal denominator and $\cI(\cM)=\cV(\F^3)$, that is, all spaces are independent. 
        In particular, all $2$-dimensional spaces are independent, even though they do not assume the same rank value.
\item Dependent spaces may have a larger rank value than independent spaces of the same dimension.
For instance, let $\F=\F_2$ and $\cC\leq\F^{5\times 3}$ be the code generated by the standard basis matrices
$E_{11},\,E_{12},\,E_{23},\,E_{32},\,E_{41},\,E_{42}$.
In the column polymatroid $\cM_\cc(\cC)=(\F^5,\rho_\cc)$ the subspace $I=\subspace{e_2,e_3}$ is independent with $\rho_\cc(I)=2/3$,
and the subspace $V=\subspace{e_1+e_2,e_5}$ is dependent with $\rho_\cc(V)=1$.
\end{alphalist}
\end{exa}

Let us consider the independent spaces of some \qPM{}s from the previous section.

\begin{exa}\label{E-IndSpaces}
\begin{alphalist}
\item Consider the uniform $q$-matroid $\cU=\cU_{k}(E)$ from \cref{E-DualUnif}.
         Then $\cI_1(\cU)=\{V\in\cV(E)\mid \dim(V)\leq k\}$.
\item Let $m\geq n$ and $\cC\leq\Fnm$ be an MRD code with rank distance~$d$. 
         \cref{P-MRDCodes}(a) yields $\cM_\cc(\cC)=\cU_{n-d+1}(\F^n)$  and thus
         $\cI_1(\cM_\cc(\cC))=\{V\in\cV(\F^n)\mid \dim V\leq n-d+1\}$ by~(a).
\item Let $m\leq n$ and $\cC\leq\Fnm$ be an MRD code with rank distance~$d$. 
         With the aid of \cref{P-MRDCodes}(b) one verifies that
         $\cI_m(\cM_\cc(\cC))=\cV(\F^n)$, that is, every subspace is $m$-independent.
         In particular, the two non-equivalent \qPM{}s $\cM_\cc(\cC_1)$ and $\cM_\cc(\cC_2)$ 
         in \cref{E-RowMatMRD} trivially have the same collection of $2$-independent spaces 
         (and~$2$ is the principal denominator). 
         This shows that the collection of independent spaces does not determine the \qPM.
\item  Part~(c) shows another striking difference to $q$-matroids. If all spaces of a $q$-matroid $\cM=(E,\rho)$ 
         are independent (i.e., $1$-independent), then~$\cM$ is the uniform matroid $\cU_{n}(E)$, where $n=\dim E$.
         If furthermore $\cM=\cM_\cc(\cC)$ for some right $\F_{q^m}$-linear code $\cC\leq\Fnm$, then 
         $\cC=\Fnm$. This follows immediately from \cref{R-IndepMatroid} together with~\eqref{e-rhoV}.
\end{alphalist}
\end{exa}

We continue with some basic facts.

\begin{rem}\label{R-IndJuPe}
Let $\cM=(E,\rho)$ be a \qPM{} with denominator~$\mu$.
\begin{alphalist}
\item The zero subspace of~$E$ is $\mu$-independent.
\item Every dependent space~$V$ contains a circuit: take any subspace $W$ of~$V$ of smallest dimension satisfying
         $\rho(W)<\dim W/\mu$ (which clearly exists).
\item Let $V\in\cV(E)$ be a $\mu$-circuit. Then $\mu\rho(V)=\dim V-1=\mu\rho(W)$ for all hyperplanes~$W$ in~$V$. 
       Indeed, independence of~$W$ along with (R2) tells us that
       \[
           \dim V-1=\dim W \leq\mu\rho(W)\leq\mu\rho(V)<\dim V.
        \]
        Thus we have equality since $\mu\rho$ takes integer values. 
        \cref{E-FqslinearCircuits} below shows that not every subspace~$V$ satisfying $\mu\rho(V)=\dim V-1=\mu\rho(W)$ for all its      
        hyperplanes~$W$ is a circuit. 
\item Part~(c) implies that if~$\dim V=1$, then $V$ is a $\mu$-loop iff $\rho(V)=0$.
         As a consequence, loops do not depend on the choice of denominator. 
        On the other hand, if $\mu_1<\mu_2$ are distinct denominators of~$\cM$, then the $\mu_2$-circuits with positive rank 
        value are distinct from the $\mu_1$-circuits.
        But every $\mu_2$-circuit is also $\mu_1$-dependent and thus contains a $\mu_1$-circuit by (b).
\item Let $x_1,\ldots,x_t\in E$ be linearly independent vectors such that $\subspace{x_i}$ is a $\mu$-loop for all $i\in[t]$.
        Then $\rho(\subspace{x_1}+\ldots+\subspace{x_t})=0$. For $t=2$ this is a consequence of the submodularity~(R3) because
        $\rho(\subspace{x_1}+\subspace{x_2})=\rho(\subspace{x_1}+\subspace{x_2})+\rho(\subspace{x_1}\cap\subspace{x_2}) 
        \leq\rho(\subspace{x_1})+\rho(\subspace{x_2})$ (see also \cite[Lem.~11]{JuPe18} for $q$-matroids), and the general case follows similarly via induction.
\end{alphalist}
\end{rem}

\begin{exa}\label{E-FqslinearCircuits}
Let $\F=\F_2$.
Consider the primitive polynomial $f=x^4+x+1\in\F[x]$ and let $U=\Delta_f^5$, where $\Delta_f$ is defined as in~\eqref{e-Deltaf}.
Thus $U\in\GL_4(\F)$.
Let
\[
  A_1=\begin{pmatrix}0 & 1 & 1 & 0 \\
0 & 0 & 0 & 0 \\
0 & 0 & 0 & 0 \\
1 & 0 & 0 & 1 \\
0 & 0 & 1 & 1 \\
1 & 0 & 0 & 1\end{pmatrix},\quad
  A_2=\begin{pmatrix}1 & 0 & 0 & 0 \\
0 & 0 & 0 & 0 \\
1 & 0 & 0 & 0 \\
1 & 0 & 0 & 1 \\
0 & 1 & 1 & 1 \\
0 & 0 & 0 & 1\end{pmatrix},\quad
  A_3=\begin{pmatrix}1 & 1 & 0 & 1 \\
0 & 0 & 1 & 0 \\
0 & 0 & 1 & 0 \\
1 & 0 & 0 & 0 \\
0 & 0 & 0 & 1 \\
1 & 0 & 1 & 0\end{pmatrix}
\]
and define $\cC=\subspace{A_1,\,A_2,\,A_3,\,A_1U,\,A_2U,\,A_3U}$. 
Then~$\cC$ is a right $\F_{2^2}$-linear rank-metric code of dimension~$6$ and rank distance~$3$ 
(see \cref{D-Fqn-linear} for right $\F_{2^2}$-linearity).
The principal denominator of~$\cM_\cc(\cC)$ is $\mu=2$.
There exist 497 $\mu$-circuits, one of which has dimension~$1$ and all others have dimension~$4$. 
An additional 169 spaces~$V$ satisfy $\mu\rho_\cc(V)=\dim V-1$ (all of them have dimension~$2,3$, or~$4$),
and 97 of them also satisfy $\mu\rho_\cc(W)=\dim V-1$ for all its hyperplanes~$W$.
\end{exa}

Independence behaves well under scaling-equivalence if the denominator is taken into account.

\begin{rem}\label{R-Rescaling}
Let $\dim E_1=\dim E_2$ and $\cM_i=(E_i,\rho_i), i=1,2$, be \qPM{}s with principal denominators $\mu_i$.
Suppose $\cM_1$ and~$\cM_2$ are scaling-equivalent, say $\rho_2(\alpha(V))=a\rho_1(V)$  for all $V\in\cV(E_1)$, where $a\in\Q_{>0}$ and $\alpha:E_1\longrightarrow E_2$ an isomorphism.
Then $a^{-1}\mu_1\rho_2(\alpha(V))=\mu_1\rho_1(V)\in\N$ and thus $a^{-1}\mu_1$ is a denominator of
$\cM_2$. 
Hence $a^{-1}\mu_1=k\mu_2$ for some $k\in\N$; see Remark~\ref{R-PrDenominator}.
Similarly, $a\mu_2=\hat{k}\mu_1$ for some $\hat{k}\in\N$.
Thus $k=\hat{k}=1$, and hence $a\mu_2=\mu_1$. 
Now we have 
$\mu_2\rho_2(\alpha(V))=\mu_1\rho_1(V)$ for all $V\in\cV(E)$ and therefore
\begin{align*}
  V\text{ is $\mu_1$-independent in  $\cM_1$}&\Longleftrightarrow
   \mu_1\rho_1(W)\geq\dim W\text{ for all subspaces }W\leq V\\
    &\Longleftrightarrow\mu_2\rho_2(\alpha(W))\geq\dim\alpha(W) \text{ for all subspaces }\alpha(W)\leq\alpha(V)\\
     &\Longleftrightarrow \alpha(V)\text{ is $\mu_2$-independent in  $\cM_2$}.
\end{align*}
\end{rem}

Before we continue with our study of independent spaces, we briefly focus on \qPM{}s induced by rank-metric codes and 
discuss the relation of (in-)dependent spaces and code properties.
Obviously, the relation depends on the chosen denominator.

\begin{rem}\label{R-RMCMatroidIndSp}
Let $\cC\leq\Fnm$ be a rank-metric code with column polymatroid $\cM=(\F^n,\rho_\cc)$.
\begin{alphalist}
\item The most obvious information about the code is contained in \cref{R-ddperp}: the distance~$d^\perp$ of the dual code is 
         the largest integer $\ell$ for which all $\ell$-dimensional subspaces~$V$ satisfy $\rho_\cc(V)=\dim V$. From
         \cref{R-IndepMatroid} we know that all these subspaces are $\mu$-independent for every denominator~$\mu$ of~$\cM$.
\item \cref{D-Indep} shows that the condition for independence relaxes with increasing denominator.
         For this reason there may be few dependent spaces if the principal denominator of~$\cM$ is~$m$.
         To make this more precise, let us consider the circuits for a given denominator~$\mu$.
         From \cref{R-IndJuPe}(c) we know that if $V$ is a $\mu$-circuit of dimension~$v$, then $\mu\rho_\cc(V)=v-1$. 
         The definition of $\rho_\cc$ thus implies
         \[
           V\text{ is a $\mu$-circuit}\Longrightarrow \dim\cC(V^\perp,\cc)=\dim\cC-\frac{mv}{\mu}+1
          \]
          (recall from \cref{R-PrDenominator} that $\mu$ is a divisor of~$m$).
          The right hand side means that~$\cC$ is equivalent to a code
           \[
             \tilde{\cC}=\bigg\langle
             \begin{pmatrix}A_{1,1}\\0\end{pmatrix},\ldots,\begin{pmatrix}A_{1,k-\gamma+1}\\0\end{pmatrix},
             \begin{pmatrix}A_{1,k-\gamma}\\A_{2,k-\gamma}\end{pmatrix},\ldots,\begin{pmatrix}A_{1,k}\\A_{2,k}\end{pmatrix}\bigg\rangle,
          \]
          where $k=\dim\cC,\,\gamma=mv/\mu$, and $A_{1,j}\in\F^{(n-v)\times m},\,A_{2,j}\in\F^{v\times m}$.
          In particular, if~$\cM$ contains an $m$-loop, then $\gamma=1$ and~$\cC$ is equivalent to a row degenerate code: 
          the last row of all matrices in~$\tilde{\cC}$ is zero.
          More generally, for any fixed dimension~$v$, the existence of a $v$-dimensional $\mu$-circuit becomes more restrictive 
           with increasing~$\mu$.
          \end{alphalist}
\end{rem}

We now return to general \qPM{}s.
In order to derive the main result about the collection of $\mu$-independent spaces, we will make use of an auxiliary $q$-matroid.
The following construction mimics the corresponding one in \cite[Prop.~11.1.7]{Ox11} for classical polymatroids.

\begin{theo}\label{T-AuxMatroid}
Let~$\cM=(E,\rho)$ be a \qPM{} with denominator~$\mu$. Define the map
\[
  r_{\rho,\mu}:\cV(E)\longrightarrow\N_0,\quad V\longmapsto \min\{\mu\rho(W)+\dim V-\dim W\mid W\leq V\}.
\]
Then $\cZ:=\cZ_{\cM,\mu}:=(E, r_{\rho,\mu})$ is a $q$-matroid, and the independent spaces of $\cZ$ coincide with the $\mu$-independent spaces of~$\cM$, i.e.,
\[
     \cI_\mu(\cM)=\cI(\cZ)=\{I\in\cV(E)\mid r_{\rho,\mu}(I)=\dim I\}.
\]
\end{theo}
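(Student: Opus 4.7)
The plan is to verify the three rank axioms (R1)--(R3) for $r_{\rho,\mu}$, together with integrality, to conclude that $\cZ$ is a $q$-matroid, and then to unfold both definitions of independence to see that they coincide.

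First I would note that $r_{\rho,\mu}$ takes values in $\N_0$: since $\mu$ is a denominator, $\mu\rho(W)\in\N_0$, and $\dim V-\dim W\geq 0$ whenever $W\leq V$, so the minimum in the definition of $r_{\rho,\mu}(V)$ is a nonnegative integer. For (R1), the lower bound is clear, and the upper bound $r_{\rho,\mu}(V)\leq\dim V$ follows by choosing $W=0$ and using $\rho(0)=0$.

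The main obstacle is monotonicity (R2), because $r_{\rho,\mu}$ is defined by a minimum over subspaces of $V$, so enlarging $V$ enlarges the set of candidates. The key trick is as follows: given $V\leq V'$, let $W'\leq V'$ attain the minimum defining $r_{\rho,\mu}(V')$, and set $W=W'\cap V\leq V$. Monotonicity of $\rho$ gives $\mu\rho(W)\leq\mu\rho(W')$, and the modular identity $\dim W'+\dim(W'+V)=\dim W+\dim V'$ (combined with $W'+V\leq V'$) yields
\[
\dim V-\dim W=\dim(W'+V)-\dim W'\leq\dim V'-\dim W'.
\]
Adding these, $r_{\rho,\mu}(V)\leq\mu\rho(W)+\dim V-\dim W\leq\mu\rho(W')+\dim V'-\dim W'=r_{\rho,\mu}(V')$. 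For submodularity (R3), let $W_i\leq V_i$ attain the minima defining $r_{\rho,\mu}(V_i)$ for $i=1,2$. Then $W_1+W_2\leq V_1+V_2$ and $W_1\cap W_2\leq V_1\cap V_2$ are valid candidates; using submodularity of $\rho$ and the standard dimension identity $\dim(X+Y)+\dim(X\cap Y)=\dim X+\dim Y$ applied to both $(V_1,V_2)$ and $(W_1,W_2)$, the sum $r_{\rho,\mu}(V_1+V_2)+r_{\rho,\mu}(V_1\cap V_2)$ collapses to at most $r_{\rho,\mu}(V_1)+r_{\rho,\mu}(V_2)$.

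Finally, for the characterization of independent spaces, I would just unfold the definitions: $I\in\cI(\cZ)$ means $r_{\rho,\mu}(I)=\dim I$, which (since $r_{\rho,\mu}(I)\leq\dim I$ is achieved by $W=0$) is equivalent to $\mu\rho(W)+\dim I-\dim W\geq\dim I$ for every $W\leq I$, i.e.\ $\mu\rho(W)\geq\dim W$ for every $W\leq I$, which is exactly the definition of $\mu$-independence in $\cM$ from \cref{D-Indep}. This chain of equivalences simultaneously shows $\cI_\mu(\cM)=\cI(\cZ)$ and confirms the displayed equality with $\{I\mid r_{\rho,\mu}(I)=\dim I\}$.
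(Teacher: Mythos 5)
Your proposal is correct, and its overall architecture (verify integrality and (R1)--(R3) for $r_{\rho,\mu}$, then unfold the definition of independence via the choice $W=0$) is the same as the paper's; the R1, R3 and independence arguments are essentially identical to the published proof. Where you genuinely diverge is monotonicity (R2): the paper reduces to the case $\dim V'=\dim V+1$, argues by contradiction, and decomposes a putative minimizer as $W'=X\oplus\subspace{y}$ with $X\leq V$, $y\notin V$, eventually contradicting monotonicity of $\rho$ on $X\leq W'$. Your direct argument --- take a minimizer $W'\leq V'$, set $W=W'\cap V$, and use monotonicity of $\rho$ together with $\dim V-\dim W=\dim(W'+V)-\dim W'\leq\dim V'-\dim W'$ --- is cleaner: it avoids both the codimension-one reduction and the proof by contradiction, and it makes transparent why enlarging $V$ cannot decrease the minimum. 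One small slip: the modular identity as you state it in prose, $\dim W'+\dim(W'+V)=\dim W+\dim V'$, has its terms garbled (it should read $\dim W'+\dim V=\dim W+\dim(W'+V)$), but the displayed consequence you actually use is the correct rearrangement, so the argument stands.
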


\begin{proof}
We make use of the induced integer $\rho$-function $\tau=\mu\rho$.
Thus $\tau(V)=\mu\rho(V)\leq\mu\dim V$ for all $V\in\cV(E)$.
Clearly the map $r:=r_{\rho,\mu}$ takes integer values. 
We now verify (R1) -- (R3) of \cref{D-PMatroid} for $r$.
\\[.5ex]
(R1) Clearly $r(V)\geq0$ for all~$V$. Furthermore, $r(V)\leq \tau(0)+\dim(V)-\dim(0)=\dim V$.
\\[.5ex]
(R2)
Let $V\leq V'$. 
Without loss of generality we may assume $\dim V'=\dim V+1$ and thus $V'=V\oplus\subspace{x}$ for some $x\in E$.
Assume by contradiction that $r(V)>r(V')$. 
Then there exists $W'\leq V'$ such that
\begin{equation}\label{e-rineq}
   \tau(W')+\dim V'-\dim W'<\tau(W)+\dim V-\dim W\ \text{ for all }W\leq V.
\end{equation}
Clearly $W'\not\leq V$ and thus we may write $W'=X\oplus\subspace{y}$ for some $X\leq V$ and $y\not\in V$.
Then $\dim X=\dim V-\dim V'+\dim W'$ and~\eqref{e-rineq} leads to
\[
     \tau(W)\!-\!\dim W>\tau(W')\!+\!\dim V'\!-\!\dim W'\!-\!\dim V=\tau(W')\!-\!\dim X\text{ for all }W\leq V.
\]
Choosing $W=X$, we arrive at $\tau(X)>\tau(W')$ and thus $\rho(X)>\rho(W')$.
Since $X\leq W'$ this contradicts that~$\rho$ is a rank function.
 All of this establishes (R2) for the map~$r$.
 \\[.5ex]
(R3) Let $V,V'\in\cV(E)$. Choose  $W\leq V,\,W'\leq V'$ such that
\[
   r(V)=\tau(W)+\dim V-\dim W\ \text{ and }\ r(V')=\tau(W')+\dim V'-\dim W'.
\]
Then $W+W'\leq V+ V'$ and $W\cap W'\leq V\cap V'$ and therefore
\begin{align*}
  r(V\!+\!V')+r(V\cap V')&\leq\tau(W+W')+\dim(V+V')-\dim(W+W')\\
       &\quad\ +\tau(W\cap W')+\dim(V\cap V')-\dim(W\cap W')\\
     &=\tau(W\!+\!W')+\tau(W\!\cap\! W')+\dim V-\dim W+\dim V'-\dim W'\\
     &\leq \tau(W)+\tau(W')+\dim V-\dim W+\dim V'-\dim W'\\
     &=r(V)+r(V'),
\end{align*}
where the second inequality follows from~(R3) for~$\rho$.
This establishes~(R3) for the map~$r$.
\\[.5ex]
It remains to investigate the $\mu$-independent spaces. 
From \cref{D-Indep},~(R1), and \cref{R-IndepMatroid} we obtain 
immediately 
\begin{align*}
  \text{$V$ is $\mu$-independent }&\Longleftrightarrow \tau(W)\geq\dim W\text{ for all }W\leq V\\
    &\Longleftrightarrow \tau(W)+\dim V-\dim W\geq\dim V\text{ for all }W\leq V\\
    &\Longleftrightarrow r(V)\geq\dim V\\
    &\Longleftrightarrow r(V)=\dim V,
\end{align*}
and this establishes the stated result.
\end{proof}

As the next example shows, if~$\cM$ is a $q$-matroid, then it coincides with its auxiliary $q$-matroid if we choose the principal 
denominator $\mu=1$.

\begin{exa}\label{E-Auxmu1}
Let~$\cM=(E,\rho)$ be a $q$-matroid, thus $\rho$ takes only integer values. 
\begin{alphalist}
\item Fix $\mu=1$.
        Then $r_{\rho,1}(V)=\min\{\rho(W)+\dim V-\dim W\mid W\leq V\}$ for $V\in\cV(E)$.
        We show now that $r_{\rho,1}=\rho$. Fix $V\in\cV(E)$.
        Choosing $W=V$ we obtain $r_{\rho,1}(V)\leq \rho(V)$.
        On the other hand, for every $W\leq V$ there exists $Z\leq V$ such that $W\oplus Z=V$. 
        Thus submodularity (R3) applied to~$\rho$ yields 
        \[
                  \rho(V)=\rho(W+Z)\leq\rho(W)+\rho(Z)\leq\rho(W)+\dim Z
                  =\rho(W)+\dim V-\dim W.
        \]
        This shows $\rho(V)\leq r_{\rho,1}(V)$.
\item If we choose $\mu>1$, then there is in general no obvious relation between~$\rho$ and~$r_{\rho,\mu}$. 
        Consider for example the following $q$-matroid. Let $n\geq3$ and fix a $2$-dimensional subspace $X\in\cV(\F^n)$.
        Set $\rho(X)=1$ and $\rho(V)=\min\{\dim V,\,2\}$ for $V\neq X$.
        One can check straightforwardly that $\cM=(\F^n,\rho)$ is a $q$-matroid (this also follows from \cite[Prop.~4.7]{GLJ21}).
        Choosing $\mu=2$,  one obtains $r_{\rho,2}=\min\{\dim V,4\}$, and thus
        the $q$-matroids $\cM$ and $\cZ_{\cM,2}$ 
        are not equivalent.
        Furthermore, $\cZ_{\cM,2}=\cZ_{\cU,2}$, where $\cU$ is the uniform $q$-matroid $\cU_2(\F^n)$, and therefore
         the auxiliary $q$-matroid $\cZ_{\cM,\mu}$ of a \qPM~$\cM$ does not uniquely determine~$\cM$ (even if one specifies the denominator).
\end{alphalist}
\end{exa}

\cref{T-AuxMatroid} shows that the $\mu$-independent spaces of the \qPM{} $\cM$ coincide with 
the independent spaces of the auxiliary $q$-matroid $\cZ_{\cM,\mu}$.
Therefore, all properties of independent spaces of $q$-matroids that do not involve the value of the rank function hold true 
for \qPM{}s as well.
Such properties have been derived in \cite[Thm.~8]{JuPe18}.
Before formulating our result we cast the following important notions.

\begin{defi}\label{D-MaxInd}
Let~$\cM=(E,\rho)$ be a \qPM{} with denominator~$\mu$.
For $V\in\cV(E)$ we define 
\[
     \cI_\mu(V)=\{I\in\cI_\mu(\cM)\mid I\leq V\}.
\]
A subspace $\hat{I}\in\cI_\mu(V)$ is said to be a \emph{$\mu$-basis} of~$V$ 
if $\dim\hat{I}=\max\{\dim I\mid I\in\cI_\mu(V)\}$.
We denote by  $\cB_\mu(V)$ the set of all $\mu$-bases of~$V$.
The $\mu$-bases of~$E$ are called the \emph{$\mu$-bases} of~$\cM$.
\end{defi}

A $\mu$-basis of~$V$ is thus a maximal-dimensional $\mu$-independent subspace of~$V$. Their rank values will be discussed 
in the next section.
Note that the sets $\cI_\mu(V)$ and $\cB_\mu(V)$ are non-empty for every $V\in\cV(E)$ since clearly $\{0\}$ is $\mu$-independent.

We are now ready to present the following properties of the collection of $\mu$-independent spaces of a \qPM. 
The result is an immediate consequence of \cref{T-AuxMatroid} together with \cite[Thm.~8]{JuPe18}.

\begin{cor}\label{C-Indep}
Let~$\cM=(E,\rho)$ be a \qPM{} with denominator~$\mu$ and set $\cI_\mu:=\cI_\mu(\cM)$.
Then 
\begin{mylist}
\item[(I1)\hfill] $\cI_\mu\neq\emptyset$, in fact $\{0\}\in\cI_{\mu}$.
\item[(I2)\hfill] If $I\in\cI_\mu$ and $J\leq I$, then $J\in\cI_\mu$.
\item[(I3)\hfill] If $I,\,J\in\cI_\mu$ and $\dim I<\dim J$, then there exists $x\in J\setminus I$ such that 
        $I\oplus\subspace{x}\in\cI_\mu$.
\item[(I4)\hfill] Let $V,\,W\in\cV(E)$ and $I\leq V,\,J\leq W$ be $\mu$-bases of $V$ and~$W$, respectively.
       Then there exists a $\mu$-basis of $V+W$ that is contained in $I+J$.
\end{mylist}
\end{cor}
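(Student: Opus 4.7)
The plan is to deduce all four properties from \cref{T-AuxMatroid} and the known analogue for $q$-matroids \cite[Thm.~8]{JuPe18}. Recall that \cref{T-AuxMatroid} gives the equality
\[
   \cI_\mu(\cM)=\cI(\cZ),\qquad \cZ=\cZ_{\cM,\mu},
\]
so the family of $\mu$-independent subspaces of~$\cM$ is literally the family of independent subspaces of a $q$-matroid on the same ground space~$E$.

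For (I1), (I2), (I3) I would simply observe that each of these assertions is a purely set-theoretic statement about the collection $\cI_\mu$ itself---it mentions neither~$\rho$ nor~$\mu$ except through membership in~$\cI_\mu$. Since $\cI_\mu=\cI(\cZ)$ and~$\cZ$ is a $q$-matroid, the corresponding parts of \cite[Thm.~8]{JuPe18} apply verbatim.

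For (I4) one additional bookkeeping step is needed: the notion of $\mu$-basis of~$V$ in~$\cM$ (\cref{D-MaxInd}) is defined as a maximum-dimensional element of $\cI_\mu(V)$, while the corresponding statement in \cite[Thm.~8]{JuPe18} speaks of bases in the $q$-matroid~$\cZ$. I would point out that these notions coincide: both sets $\cI_\mu(V)$ and $\cI(\cZ)\cap\{J\leq V\}$ are equal, and a maximum-dimensional element of one is a maximum-dimensional element of the other. Consequently the $\mu$-bases of~$V$ in~$\cM$ are precisely the bases of~$V$ in~$\cZ$, and the ``basis exchange'' property (I4) for~$\cZ$ from \cite[Thm.~8]{JuPe18} transfers directly to~$\cM$.

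No real obstacle arises, since the auxiliary $q$-matroid has been built precisely to align the independence structures. The only point that requires care is the match of definitions in (I4); once that is noted, the corollary becomes an immediate translation of the $q$-matroid result via~$\cZ_{\cM,\mu}$.
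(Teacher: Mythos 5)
Your proposal is correct and matches the paper's own argument: the paper also derives the corollary as an immediate consequence of \cref{T-AuxMatroid} combined with \cite[Thm.~8]{JuPe18}, transferring the independence (and hence basis) structure from the auxiliary $q$-matroid $\cZ_{\cM,\mu}$. Your extra remark on matching the notion of $\mu$-basis with bases in $\cZ$ for (I4) is a reasonable piece of bookkeeping that the paper leaves implicit.
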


Note that~(I3) implies that for any $V\in\cV(E)$ and any $\hat{I}\in\cI_\mu(V)$ we have
\[
   \hat{I}\text{ is dimension-maximal in }\cI_\mu(V)\Longleftrightarrow\hat{I}\text{ is inclusion-maximal in }\cI_\mu(V).
\]
In other words,~$\hat{I}$ is a $\mu$-basis of~$V$ if and only if there exists no $J\in\cI_\mu(V)$ such that $\hat{I}\lneq J$.
Thus, the $\mu$-bases of~$V$ are exactly the maximal elements of the poset $(\cI_\mu(V),\leq)$.

Since the independent spaces of the auxiliary $q$-matroid $\cZ_{\cM,\mu}$ coincide with those of the \qPM{}~$\cM$, 
the same is true for the dependent spaces, circuits, and bases. 
As a consequence, any property about the collection of these spaces in $q$-matroids holds true for \qPM{}s as well.
Let us illustrate this for the dependent spaces and bases.
The following properties for $q$-matroids have been established in \cite[Thm.~63 and Lem.~66]{BCJ21} and therefore
apply to \qPM{}s as well.

\begin{cor}\label{C-DepSpacesCircuits}
Let~$\cM=(E,\rho)$ be a \qPM{} with denominator~$\mu$. 
Let $\cD_\mu$ and $\cB_\mu$ be the collection of $\mu$-dependent spaces and $\mu$-bases of~$\cM$, respectively. 
Then~$\cD_\mu$ and~$\cB_\mu$ satisfy
\begin{mylist2}
\item[(D1)\hfill] $\{0\}\not\in\cD_\mu$.
\item[(D2)\hfill] If $D_1\in\cD_\mu$ and $D_2\in\cV(E)$ such that $D_1\subseteq D_2$, then $D_2\in\cD_\mu$.
\item[(D2)\hfill] Let $D_1,\,D_2\in\cD_\mu$ be such that $D_1\cap D_2\not\in\cD_\mu$. 
        Then every subspace of $D_1+D_2$ of codimension~$1$ is in $\cD_\mu$.
\item[(B1)\hfill] $\cB_\mu\neq\emptyset$.
\item[(B2)\hfill] Let  $B_1,\,B_2\in \cB_\mu$ be such that $B_1\leq B_2$. Then $B_1=B_2$.
\item[(B3)\hfill] Let $B_1,\,B_2\in\cB_\mu$ and $A$ be a subspace of~$B_1$ of codimension~$1$ such that $B_1\cap B_2\leq A$.
                        Then there exists a $1$-dimensional subspace~$Y$ of~$B_2$ such that $A+Y\in\cB_\mu$.
\item[(B4)\hfill] Let $A_1,\,A_2\in\cV(E)$ and $I_1,\,I_2$ be maximal dimensional intersections of some members 
                        of~$\cB_{\mu}$ with~$A_1$ and $A_2$, respectively. 
                        Then there exist a maximal dimensional intersection of a member of~$\cB_{\mu}$ with $A_1+A_2$ 
                        that is contained in $I_1+I_2$.
\end{mylist2}
\end{cor}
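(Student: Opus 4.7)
The plan is to reduce the corollary to the corresponding results for $q$-matroids via the auxiliary $q$-matroid construction of \cref{T-AuxMatroid}. The key observation is that, by that theorem, $\cI_\mu(\cM) = \cI(\cZ_{\cM,\mu})$ as collections of subspaces of~$E$. Taking complements in $\cV(E)$ immediately gives that the collection $\cD_\mu$ of $\mu$-dependent spaces of~$\cM$ coincides with the collection of dependent spaces of the $q$-matroid~$\cZ_{\cM,\mu}$.

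Next I would verify that $\cB_\mu$ coincides with the set of bases of $\cZ_{\cM,\mu}$ in the usual $q$-matroid sense. By \cref{D-MaxInd}, a $\mu$-basis of~$E$ is a subspace of maximal dimension in $\cI_\mu(\cM) = \cI(\cZ_{\cM,\mu})$; by property (I3) of \cref{C-Indep} (or equivalently the exchange property of the auxiliary $q$-matroid), this is the same as an inclusion-maximal element of that collection, which is precisely the definition of a basis in $\cZ_{\cM,\mu}$. Hence $\cB_\mu$ equals the basis family of $\cZ_{\cM,\mu}$.

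Once these two identifications are in place, the proof is simply the invocation of \cite[Thm.~63 and Lem.~66]{BCJ21}, which establish exactly the properties (D1), (D2) (in both numberings), and (B1)--(B4) for the dependent spaces and bases of an arbitrary $q$-matroid. Applying those results to $\cZ_{\cM,\mu}$ and translating back via the identifications above gives the claimed properties for~$\cM$. None of the listed properties mentions the rank function itself, so no information about~$\rho$ is lost when passing between~$\cM$ and~$\cZ_{\cM,\mu}$; this is precisely the caveat already noted in the paragraph preceding \cref{D-MaxInd}.

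There is no genuine obstacle to overcome: the construction in \cref{T-AuxMatroid} was tailored for exactly this kind of transfer, and the cited lemmas from \cite{BCJ21} do the combinatorial work. The only mild care needed is in the bases step, to note that the dimension-maximal and inclusion-maximal definitions of a basis agree in the $q$-matroid~$\cZ_{\cM,\mu}$, which is immediate from (I3). Accordingly, the proof will consist of a short paragraph stating the two identifications $\cD_\mu = \cD(\cZ_{\cM,\mu})$ and $\cB_\mu = \cB(\cZ_{\cM,\mu})$, followed by a one-line invocation of \cite[Thm.~63 and Lem.~66]{BCJ21}.
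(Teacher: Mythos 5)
Your proposal is correct and matches the paper's own argument: the paper likewise identifies the $\mu$-dependent spaces and $\mu$-bases of $\cM$ with the dependent spaces and bases of the auxiliary $q$-matroid $\cZ_{\cM,\mu}$ from \cref{T-AuxMatroid} (using the remark after \cref{C-Indep} that dimension-maximal and inclusion-maximal agree) and then invokes \cite[Thm.~63 and Lem.~66]{BCJ21}. Nothing further is needed.
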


In \cite[Cor.~65]{BCJ21} and \cite[Thm.~37]{JuPe18} it has been shown that any collection of subspaces satisfying (D1)--(D3) 
(resp.~(B1)--(B4)) is the collection of dependent spaces (resp.\ bases) of a unique $q$-matroid. 
Similar statements hold true for circuits in $q$-matroids (see \cite[Cor.~68]{BCJ21}).
The following examples illustrate that none of these properties extends to \qPM{}s -- even if we take the rank values 
into account.

\begin{exa}\label{E-MaxRho}
\begin{alphalist}
\item Consider the rank-metric codes~$\cC_1,\,\cC_2$  in \cref{E-RowMatMRD} and the associated column polymatroids 
$\cM_\cc(\cC_1)$ and $\cM_\cc(\cC_2)$.
Both have principal denominator~$2$, and in both cases every subspace of $\F^5$ is $2$-independent.
Thus the only $2$-basis of $\cM_\cc(\cC_i)$ is $\F^5$ for $i=1,2$, and in both \qPM{}s it has rank value $5/2$.
Yet, the two \qPM{}s are not equivalent. 
This shows that the bases of a \qPM{} along with their rank values do not uniquely determine the \qPM. 
Trivially, this example also shows that the circuits and dependent spaces along with their rank values do not determine the \qPM{}.
For later purposes we also note that in both $\cM_\cc(\cC_1)$ and $\cM_\cc(\cC_2)$ all 4-dimensional subspaces and plenty of 
3-dimensional subspaces have rank value~$5/2$ as well.
\item Let $\F=\F_2$ and consider the codes $\cC=\subspace{A_1,A_2,A_3},\,\cC'=\subspace{A_1,A_2,A_3'}\leq\F^{4\times 3}$, 
where 
\[
  A_1=\begin{pmatrix}0&0&0\\1&0&0\\0&1&1\\0&1&0\end{pmatrix},\quad
  A_2=\begin{pmatrix}1&0&1\\1&0&0\\0&0&0\\1&1&1\end{pmatrix},\quad
  A_3=\begin{pmatrix}0&1&0\\0&1&1\\0&1&0\\0&1&1\end{pmatrix},\quad
  A_3'=\begin{pmatrix}1&0&0\\0&0&0\\1&0&1\\1&0&1\end{pmatrix}.
\]
Consider the associated polymatroids $\cM=\cM_\cc(\cC)=(\F^4,\rho_\cc)$ and  
$\cM'=\cM_\cc(\cC')=(\F^4,\rho_\cc')$.
Both have principal denominator~$3$, and in both \qPM{}s the space~$\F^4$ is the only dependent space.
Hence~$\cM$ and~$\cM'$ share the same bases, namely all $3$-dimensional spaces.
Moreover, $\rho_\cc(V)=1=\rho_\cc'(V)$ for all bases~$V$.
Yet,~$\cM$ and~$\cM'$ are not equivalent:
in~$\cM$ the rank value~$1$ is assumed by 33 subspaces of dimension~$2$,
whereas in $\cM'$ it is assumed by 32 subspaces of dimension~$2$ (in both \qPM{}s 4 subspaces of dimension~$1$ have rank value~$1$ as well).
\end{alphalist}
\end{exa}

On the positive side, in the next section we will show that we can fully recover a \qPM{} from its independent spaces
and their rank values.
Recall from \cref{E-IndSpaces}(c) that the independent spaces alone (without their rank values) do not uniquely determine the \qPM.

\section{The Rank Function on Independent Spaces}\label{S-IndRank}
We begin by showing that for a \qPM{} the rank function is fully determined by its values on the independent spaces.
We then go on to prove that all bases of a given subspace have the same rank value, and this value coincides with the rank value of the subspace.
This result allows us to investigate whether a collection of spaces satisfying (I1)--(I4) from \cref{C-Indep} gives rise to a \qPM{} whose
collection of independent spaces is exactly the initial collection.
Since the rank value of independent spaces in a \qPM{} is not as rigid as in a $q$-matroid, we also need to specify a meaningful rank 
function on the collection of spaces. All of this results in Theorems~\ref{T-ExtIndSpaces} and \ref{T-ClosureIndep}.

\begin{theo}\label{T-rhomax}
Let~$\cM=(E,\rho)$ be a \qPM{} with denominator~$\mu$.
Then 
\[
      \rho(V)=\max\{\rho(I)\mid I\in\cI_{\mu}(V)\}\text{ for all }V\in\cV(E).
\]
\end{theo}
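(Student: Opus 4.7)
The direction $\max\{\rho(I):I\in\cI_\mu(V)\}\leq\rho(V)$ is immediate from monotonicity (R2). The substance of the theorem is the reverse inequality, namely, exhibiting some $I\in\cI_\mu(V)$ with $\rho(I)=\rho(V)$. I would proceed by induction on $\dim V$, with the zero space as the trivial base case. For the inductive step, if $V$ itself is $\mu$-independent, one takes $I=V$; otherwise, the task reduces to finding a hyperplane $V'$ of $V$ with $\rho(V')=\rho(V)$, since then the inductive hypothesis applied to $V'$ produces an independent $I\leq V'\leq V$ with $\rho(I)=\rho(V')=\rho(V)$.

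The key technical ingredient is the following lifting lemma: if $A\leq B$ and $\rho(A+\langle x\rangle)=\rho(A)$, then $\rho(B+\langle x\rangle)=\rho(B)$. This follows from a single application of submodularity (R3) to $B$ and $A+\langle x\rangle$: the inclusion $A\leq B\cap(A+\langle x\rangle)$ lets one cancel $\rho(A)$ against $\rho(B\cap(A+\langle x\rangle))$, leaving $\rho(B+\langle x\rangle)\leq\rho(B)$, while monotonicity supplies the reverse inequality.

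Now suppose $V$ is $\mu$-dependent. By \cref{R-IndJuPe}(b), $V$ contains a $\mu$-circuit $C$, and by \cref{R-IndJuPe}(c), every hyperplane $H$ of $C$ satisfies $\mu\rho(H)=\mu\rho(C)$, so in particular $\rho(H+\langle x\rangle)=\rho(H)$ for any $x\in C\setminus H$. Next, extend $H$ to a hyperplane $V'$ of $V$ not containing $x$: when $C=V$ just set $V'=H$, and when $C\lneq V$ the quotient $V/H$ has dimension at least $2$, so one can select any codimension-one subspace of $V/H$ missing the nonzero element $x+H$ and lift it back to $V$. The lifting lemma applied with $A=H$ and $B=V'$ gives $\rho(V'+\langle x\rangle)=\rho(V')$, and since $V=V'+\langle x\rangle$ this is exactly $\rho(V)=\rho(V')$. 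The inductive hypothesis then finishes the argument.

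The main obstacle is propagating the rank identity from the small subspace $C$, where it comes for free from the definition of a circuit, to a hyperplane of $V$, which may be much larger than $C$. This propagation is exactly what the lifting lemma provides, and without it the rank equality at the circuit level would be useless for the inductive reduction. Everything else, including the existence of a suitable hyperplane $V'$ containing $H$ and avoiding $x$, is routine linear algebra.
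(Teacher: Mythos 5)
Your proof is correct, and it takes a genuinely different route from the paper's. The paper argues without induction: it fixes an independent subspace $\hat{I}\leq V$ of maximal dimension among those attaining $\max\{\rho(I)\mid I\in\cI_\mu(V)\}$, handles the case $\dim\hat{I}=\dim V-1$ by a submodularity computation against a dependent subspace $W\leq V$ of minimal dimension (using integrality of $\mu\rho$ to get $\mu\rho(W\cap\hat{I})=\mu\rho(W)$), and then reduces the general case to this one and invokes \cref{P-RankVx}(a) to pass from $\rho(\hat{I}+\subspace{x})=\rho(\hat{I})$ for all $x\in V$ to $\rho(\hat{I})=\rho(V)$. You instead induct on $\dim V$: for dependent $V$ you take a circuit $C\leq V$, use \cref{R-IndJuPe}(c) to get $\rho(H+\subspace{x})=\rho(H)$ for a hyperplane $H$ of $C$ and $x\in C\setminus H$, and your lifting lemma (a clean and correct consequence of (R2) and (R3), with $A\leq B\cap(A+\subspace{x})$ giving the cancellation) propagates this upward to $\rho(V)=\rho(V')$ for a hyperplane $V'$ of $V$ containing $H$ and avoiding $x$; the induction hypothesis then finishes. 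The essential ingredients are the same in both proofs --- submodularity together with the integer-valuedness of $\mu\rho$ on circuits/minimal dependent spaces --- but your one-hyperplane-at-a-time reduction is more self-contained, since the lifting lemma replaces the appeal to \cref{P-RankVx}(a) (which the paper imports from Jurrius--Pellikaan), whereas the paper's argument has the mild additional payoff of exhibiting \emph{which} independent subspace attains the maximum, namely a maximal-dimensional one attaining it, which foreshadows \cref{T-Basisrho}.
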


\begin{proof}
Let $V\in\cV(E)$. Set $\rho'(V)=\max\{\rho(I)\mid I\in\cI_{\mu}(V)\}$.
Thanks to~(R2), $\rho'(V)\leq\rho(V)$, and it remains to establish $\rho(V)\leq \rho'(V)$. 
Let $\hat{I}\in\cI_\mu(V)$ be of maximal possible dimension such that $\rho(\hat{I})=\rho'(V)$.
If~$V$ is $\mu$-independent, then $\hat{I}=V$ and we are done. Thus let~$V$ be $\mu$-dependent.
\\[.6ex]
\underline{Case 1:} $\dim\hat{I}=\dim V-1$.\\
Then $V=\hat{I}\oplus\subspace{x}$ for any $x\in V\setminus\hat{I}$ and submodularity of~$\rho$ implies $\rho(V)\leq \rho(\hat{I})+\rho(\subspace{x})$.
As before, we use the integer $\rho$-function $\tau=\mu\rho$.
Let~$s$ be minimal such that there exists an $s$-dimensional $\mu$-dependent subspace of~$V$, say~$W$. 
Such space exists by $\mu$-dependence of~$V$.
Then \cref{D-Indep} implies that  $\tau(W)<\dim W$.
By~(I2) $W$ is not contained in~$\hat{I}$ and thus $W=(W\cap\hat{I})\oplus\subspace{z}$ for some 
$z\in V\setminus0$.
Then $W\cap\hat{I}$ is $\mu$-independent and
\[
  \dim W-1=\dim(W\cap\hat{I})\leq\tau(W\cap\hat{I})\leq\tau(W)<\dim W,
\]
and hence $\tau(W\cap\hat{I})=\tau(W)$ because~$\tau$ takes integer values.
Using that $V=W+\hat{I}$, we obtain by submodularity of~$\tau$
\[
  \tau(V)\leq\tau(W)+\tau(\hat{I})-\tau(W\cap\hat{I})=\tau(\hat{I})=\mu\rho'(V).
\]
All of this shows that $\rho(V)=\rho'(V)$, as desired.
\\[.6ex]
\underline{Case 2:}  $\dim \hat{I}<\dim V-1$.\\ 
Let $x\in V\setminus\hat{I}$. Using that $\rho'(W)\leq\rho'(Z)$ for any subspaces $W,Z$ such that $W\leq Z$, we obtain
\[
  \rho(\hat{I})=\rho'(\hat{I})\leq\rho'(\hat{I}\oplus\subspace{x})\leq\rho'(V)=\rho(\hat{I}),
\]
and hence $\rho(\hat{I})=\rho'(W)$, where $W:=\hat{I}\oplus\subspace{x}$.
Note that~$W$ is $\mu$-dependent thanks to the maximality of~$\hat{I}$.
Furthermore, $\dim\hat{I}=\dim W-1$. Therefore Case~1 yields $\rho'(W)=\rho(W)$.
Now we arrived at $\rho(\hat{I})=\rho(\hat{I}+\subspace{x})$ for all $x\in V$, and \cref{P-RankVx}(a) tells us that 
$\rho(\hat{I})=\rho(V)$. Since $\rho(\hat{I})=\rho'(V)$, this concludes the proof.
\end{proof}

\cref{C-Indep} and \cref{T-rhomax} generalize one direction of \cite[Thm.~8]{JuPe18} where the same properties are proven for the independent spaces of $q$-matroids.
Our next goal is to generalize the other direction of \cite[Thm.~8]{JuPe18}, namely to characterize the collections of spaces along with given rank values that give rise to a \qPM{} having those spaces as independent spaces. 
The following result will be crucial.
It shows that the rank value of any $\mu$-basis of a subspace~$V$ equals the rank value of~$V$.
Recall from \cref{E-MaxRho} that the converse is not true: 
not every $I\in\cI_\mu(V)$ such that $\rho(I)=\rho(V)$ is a $\mu$-basis of~$V$.

\begin{theo}\label{T-Basisrho}
Let~$\cM=(E,\rho)$ be a \qPM{} with denominator~$\mu$.
Let $V\in\cV(E)$. Then 
\[
    \rho(I)=\rho(V)\text{ for all }I\in\cB_\mu(V).
\]
In particular, all $\mu$-bases of~$V$ have the same rank value.
\end{theo}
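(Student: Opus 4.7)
The plan is to deduce the conclusion from Proposition~\ref{P-RankVx}(a). More precisely, once we establish that $\rho(I+\subspace{x})=\rho(I)$ for every $I\in\cB_\mu(V)$ and every $x\in V$, part~(a) of that proposition applied to the pair $(I,V)$ yields $\rho(V)=\rho(I+V)=\rho(I)$, which is the desired equality.

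So fix $I\in\cB_\mu(V)$ and $x\in V$. The case $x\in I$ is trivial, and otherwise I set $W:=I+\subspace{x}$, so that $\dim W=\dim I+1$ and $W\leq V$. By maximality of $\dim I$ in $\cI_\mu(V)$ the space $W$ is $\mu$-dependent; moreover, every $\mu$-independent subspace of $W$ is also $\mu$-independent in $V$, which forces $I$ to be itself a $\mu$-basis of $W$ and in particular to have codimension one in $W$.

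The remaining and main step is to prove $\rho(W)=\rho(I)$ whenever $I$ is a codimension-one $\mu$-independent subspace of a $\mu$-dependent space~$W$. My plan here is to re-run Case~1 of the proof of Theorem~\ref{T-rhomax}: pick a $\mu$-dependent subspace $U\leq W$ of smallest dimension; by~(I2) of Corollary~\ref{C-Indep}, $U\not\leq I$, so $W=U+I$ and $U\cap I$ has codimension one in~$U$; minimality of $\dim U$ together with integrality of $\tau:=\mu\rho$ forces $\tau(U\cap I)=\tau(U)$; submodularity of $\tau$ then gives $\tau(W)\leq\tau(U)+\tau(I)-\tau(U\cap I)=\tau(I)$, and monotonicity supplies the reverse inequality.

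The conceptual obstacle is recognizing that Case~1 of Theorem~\ref{T-rhomax} never actually uses that the codimension-one $\mu$-independent subspace at hand has \emph{maximal rank} among $\mu$-independent subspaces of~$W$; it only uses $\mu$-independence and codimension one inside a $\mu$-dependent space. Once this is noticed, the theorem drops out immediately, because Proposition~\ref{P-RankVx}(a) reduces the general statement to precisely this codimension-one situation.
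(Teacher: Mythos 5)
Your proof is correct, and it takes a genuinely different route from the paper's. The paper first invokes Theorem~\ref{T-rhomax} to produce one $\mu$-basis $J$ of $V$ with $\rho(J)=\rho(V)$, handles the codimension-one case by a circuit-plus-submodularity computation, and then transfers the value to an arbitrary $\mu$-basis $I$ by induction on the codimension of $I\cap J$ in $I$, using the exchange property (I3). You instead isolate a codimension-one lemma (an independent hyperplane of a dependent space has the same rank value as that space -- your minimal dependent subspace $U$ plays exactly the role of the paper's circuit, and is in fact a circuit, so this computation is essentially the paper's Case~1) and then jump straight from $I$ to $V$ via Proposition~\ref{P-RankVx}(a), noting that for a $\mu$-basis $I$ of $V$ every one-dimensional extension $I\oplus\subspace{x}$ inside $V$ is dependent; this is the same local-to-global device the paper uses in Case~2 of Theorem~\ref{T-rhomax}, here applied to Theorem~\ref{T-Basisrho} itself. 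Your observation that the Case~1 computation only uses independence and codimension one (not rank-maximality of $\hat{I}$) is accurate. What the two approaches buy: yours is shorter, avoids the exchange-based induction and any appeal to Theorem~\ref{T-rhomax} (indeed, combined with monotonicity and (I2) it reproves that theorem, since $\rho(I)\le\max\{\rho(I')\mid I'\in\cI_\mu(V)\}\le\rho(V)$), needing only (I1)--(I2), integrality of $\tau=\mu\rho$, submodularity, and Proposition~\ref{P-RankVx}(a); the paper's argument, at the cost of the extra induction, directly establishes the basis-to-basis comparison $\tau(I)=\tau(J)$ in the style of classical matroid proofs and showcases the exchange axiom.
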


\begin{proof}
Throughout the proof we will omit the subscript~$\mu$.
The result is clearly true if $V$ is independent. 
Thus, let $V$ be dependent. 
Set $t=\dim V$.
In order to avoid denominators we use again the integer $\rho$-function $\tau:=\mu\rho$.
First of all, there exists
\begin{equation}\label{e-FixJ}
     J\in\cB(V) \text{ such that }\tau(J)=\tau(V).
\end{equation}
Indeed, by \cref{T-rhomax} there exists $J\in\cI(V)$ such that $\tau(J)=\tau(V)$, and by Property~(I2) along with the monotonicity of~$\tau$ 
we may assume that $J\in\cB(V)$. 
Note that by \cref{D-MaxInd} all spaces in $\cB(V)$ have the same dimension, which we denote by~$s$.
\\[.6ex]
\underline{Case 1:}  $s= t-1$. 
Let $I\in\cB(V)$. We want to show that $\tau(I)=\tau(V)$.
Choose a circuit, say~$C$, in~$V$. Then $\tau(C)=\dim C-1$ (see \cref{R-IndJuPe}(c)).
Clearly, $C\not\subseteq I$ by Property~(I2) and thus $C+I=V$ thanks to $\dim I=\dim V-1$.
Furthermore, $C\cap I$ is independent, being a subspace of~$I$,  and thus $\tau(C\cap I)\geq\dim(C\cap I)$.
Using submodularity, we obtain
\begin{align*}
   \tau(V)=\tau(C+I)&\leq \tau(C)+\tau(I)-\tau(C\cap I)\\
             &\leq  \dim C-1+\tau(I)-\dim(C\cap I)\\
             &=\tau(I)+\dim(C+I)-(\dim I+1)\\
             &=\tau(I),
\end{align*}
where the last step follows from $C+I=V$ and $\dim I+1=\dim V$.
All of this implies $\tau(I)=\tau(V)$, and thus all bases of~$V$ have the same rank value.
\\[.6ex]
\underline{Case 2:} $s<t-1$. 
We will show that
\begin{equation}\label{e-tauIJ}
   \tau(I)=\tau(J)\text{ for all }I\in\cB(V),
\end{equation}
where~$J$ is as in~\eqref{e-FixJ}.
We induct on the codimension of $I\cap J$ in~$I$.
Let $\dim(I\cap J)=s-r$, thus $0\leq r\leq s$. 
The case $r=0$ is trivial. 
\\
i) Let $r=1$. Then $I=(I\cap J)\oplus\subspace{x}$ for some $x\in I\setminus J$.
Set $W=J\oplus\subspace{x}$. 
Then $W\leq V$ and $\dim W=\dim J +1$. 
Thus $W$ is dependent by maximality of $J$.
Hence~$I$ and~$J$ are elements of $\cB(W)$, and Case~1 implies $\tau(I)=\tau(J)$.
\\
ii) Assume now $\tau(I)=\tau(J)$ for all $I\in\cB(V)$ such that $\dim(I\cap J)\geq s-(r-1)$ for some $r\geq2$.
Let $I\in\cB(V)$ be such that $\dim(I\cap J)=s-r$.
Choose $K\leq I$ and $x\in I\setminus J$ such that $I=(I\cap J)\oplus K\oplus\subspace{x}$ and
set $I_1=(I\cap J)\oplus K$. 
Then~$I_1$ is independent and $\dim I_1=\dim I-1=\dim J-1$. 
Thanks to Property~(I3) there exists $y\in J\setminus I_1$ such that 
\[
      I':=I_1\oplus\subspace{y}\in\cB(V).
\]
Now we have three bases, $I',\,I,\,J$, of~$V$. 
We show first $\tau(I)=\tau(I')$.
Since $y\not\in I$ we have the subspace
$W:=I\oplus\subspace{y}$ of~$V$, which must be dependent due to maximality of~$I$.
Furthermore, $I,\,I'\leq W$ and $\dim I'=\dim I=\dim W-1$, and therefore
$\tau(I)=\tau(I')$ thanks to Case~1.
Next, we show  $\tau(I')=\tau(J)$.
In order to do so, note that $I'=(I\cap J)\oplus K\oplus\subspace{y}$, where $y\in J$.
Thus $\dim(I'\cap J)\geq s-(r-1)$ and the induction hypothesis yields $\tau(I')=\tau(J)$.
All of this establishes~\eqref{e-tauIJ} and concludes the proof.
\end{proof}

\begin{rem}\label{R-BasesqMatroid}
In a $q$-matroid~$\cM=(E,\rho)$ a subspace $V\in\cV(E)$ satisfies
\[
  \text{$V$is  independent and } \rho(V)=\rho(E)\Longleftrightarrow V\text{ is a basis of }\cM.
\]
The forward direction is in fact the definition of independence in \cite[Def.~2]{JuPe18}.
Thanks to \cref{T-Basisrho} the implication ``$\Longleftarrow$'' holds true for \qPM{}s as well.
However, ``$\Longrightarrow$'' is not true as the \qPM{}s in \cref{E-MaxRho} show.
\end{rem}

We are now ready to provide a  characterization of the pairs $(\cI,\tilde{\rho})$ of collections~$\cI$ of subspaces and rank 
functions~$\tilde{\rho}$ on~$\cI$ that give rise to a \qPM{} whose collection of independent spaces is~$\cI$ and whose rank function restricts to~$\tilde{\rho}$.
Clearly,~$\cI$ has to satisfy (I1)--(I4) from \cref{C-Indep}, and~$\tilde{\rho}$ must satisfy (R1)--(R3).
However, for independence we also need the rank condition from \cref{D-Indep}. 
This leads to (R1$'$) in \cref{T-ExtIndSpaces} below.
Furthermore, since the sum of independent spaces need not be independent, we have to adjust~(R3) and replace 
$\tilde{\rho}(I+J)$ by $\max\{\tilde{\rho}(K)\mid K\in\cI,\,K\leq I+J\}$, thereby accounting for \cref{T-rhomax}.
This results in the submodularity condition (R3$'$) below.
Since one can easily find examples showing that (R1$'$)--(R3$'$) are not sufficient to guarantee submodularity 
of the extended rank function (defined in~\eqref{e-Extrho}), we also have to enforce  \cref{T-Basisrho}.
This leads to condition~(R4$'$), which
states that for any space~$V$ all maximal subspaces that are contained in~$\cI$ 
have the same rank value. 
As we will see, all these conditions together guarantee submodularity of the extended rank function,
and the spaces in~$\cI$ are independent in the resulting \qPM{}.
However, the \qPM{} may have additional independent subspaces; see \cref{E-MoreIndSpaces} below. 
In order to prevent this, we need a natural closure property. 
This will be spelled out in \cref{T-ClosureIndep}.

\begin{theo}\label{T-ExtIndSpaces}
Let $\cI$ be a subset of~$\cV(E)$. For $V\in\cV(E)$ set $\cI(V)=\{I\in\cI\mid I\leq V\}$ and denote by 
$\cIm(V)$ the set of subspaces in~$\cI(V)$ of maximal dimension.
Suppose~$\cI$ satisfies the following.
\begin{mylist}
\item[(I1)\hfill]  $\{0\}\in\cI$.
\item[(I2)\hfill] If $I\in\cI$ and $J\leq I$, then $J\in\cI$.
\item[(I3)\hfill] If $I,\,J\in\cI$ and $\dim I<\dim J$, then there exists $x\in J\setminus I$ such that 
        $I\oplus\subspace{x}\in\cI$.
\item[(I4)\hfill] Let $V,\,W\in\cV(E)$ and $I\in\cIm(V),\,J\in\cIm(W)$.
       Then there exists a space $K\in\cIm(V+W)$ that is contained in $I+J$.
\end{mylist}
Furthermore, let $\tilde{\rho}:\cI\longrightarrow\Q$ and $\mu\in\Q_{>0}$ such that $\mu\tilde{\rho}(I)\in\Z$ for all $I\in\cI$.
Suppose~$\tilde{\rho}$ satisfies the following.
\begin{mylist3}
\item[(R1$'$)\hfill]  $0\leq \mu^{-1}\dim I\leq\tilde{\rho}(I)\leq\dim I$ for all $I\in\cI$.
\item[(R2$'$)\hfill] If $I,J\in\cI$ such that $I\leq J$, then $\tilde{\rho}(I)\leq\tilde{\rho}(J)$.
\item[(R3$'$)\hfill] For all $I,J\in\cI$ we have $\max\{\tilde{\rho}(K)\mid K\in\cI(I+J)\}+\tilde{\rho}(I\cap J)\leq \tilde{\rho}(I)+\tilde{\rho}(J)$.
\item[(R4$'$)\hfill] For all $V\in\cV(E)$ and $I,J\in\cIm(V)$ we have $\tilde{\rho}(I)=\tilde{\rho}(J)$.
\end{mylist3}
Define the map
\begin{equation}\label{e-Extrho}
   \rho:\cV(E)\longrightarrow\Q,\quad V\longmapsto \max\{\tilde{\rho}(I)\mid I\in\cI(V)\}.
\end{equation}
Then $\cM=(E,\rho)$ is a \qPM{} with denominator~$\mu$, and $\cI\subseteq\cI_\mu(\cM)$.
\end{theo}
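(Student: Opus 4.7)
The plan is to verify (R1)--(R3) of \cref{D-PMatroid} for $\rho$, check that $\mu$ is a denominator, and finally establish the containment $\cI\subseteq\cI_\mu(\cM)$. First, since $E$ is finite-dimensional over the finite field~$\F$, each set $\cI(V)$ is finite and nonempty (it contains $\{0\}$ by (I1)), so the maximum in~\eqref{e-Extrho} is attained. Moreover every $\tilde\rho(I)$ is of the form $\mu^{-1}n$ with $n\in\Z$, so $\mu\rho(V)\in\N_0$ for every $V$; this gives (a) rationality of $\rho$, (b) that $\mu$ is a denominator of $\cM$, and (c) $\rho(V)\ge\tilde\rho(\{0\})\ge 0$ by (R1$'$). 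The upper bound $\rho(V)\le\dim V$ in (R1) follows from (R1$'$) since every $I\in\cI(V)$ satisfies $\tilde\rho(I)\le\dim I\le\dim V$. Monotonicity (R2) is immediate because $V\le W$ implies $\cI(V)\subseteq\cI(W)$.

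The main step is submodularity (R3). Fix $V,W\in\cV(E)$. The key idea is to pick compatible maximal-dimensional independent subspaces. Start by choosing $L\in\cIm(V\cap W)$. By iterated application of (I3), extend $L$ to some $I\in\cIm(V)$ (at each step (I3) produces a one-dimensional augmentation inside a strictly larger element of $\cIm(V)$, which remains in $\cI(V)$ by (I2)), and similarly extend $L$ to some $J\in\cIm(W)$. Then $I\cap J\in\cI$ by (I2), and $I\cap J\le V\cap W$, so $I\cap J\in\cI(V\cap W)$. Since $L\le I\cap J$ and $L$ has maximal dimension in $\cI(V\cap W)$, we conclude $L=I\cap J$, hence $I\cap J\in\cIm(V\cap W)$. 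By (I4) there exists $K\in\cIm(V+W)$ with $K\le I+J$, in particular $K\in\cI(I+J)$. Applying (R4$'$) to all three triples of maximal dimensional members gives
\[
   \rho(V)=\tilde\rho(I),\quad \rho(W)=\tilde\rho(J),\quad \rho(V\cap W)=\tilde\rho(I\cap J),\quad \rho(V+W)=\tilde\rho(K).
\]
Combining these with $\tilde\rho(K)\le\max\{\tilde\rho(K')\mid K'\in\cI(I+J)\}$ and the assumption (R3$'$) yields
\[
   \rho(V+W)+\rho(V\cap W)\le \max\{\tilde\rho(K')\mid K'\in\cI(I+J)\}+\tilde\rho(I\cap J)\le \tilde\rho(I)+\tilde\rho(J)=\rho(V)+\rho(W),
\]
which is the desired submodularity.

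It remains to show $\cI\subseteq\cI_\mu(\cM)$. For $I\in\cI$ and any subspace $J\le I$, property (I2) gives $J\in\cI$, so $J\in\cI(J)$ and therefore $\rho(J)\ge\tilde\rho(J)\ge\mu^{-1}\dim J$ by (R1$'$). Hence $I$ is $\mu$-independent.

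The main obstacle is precisely the submodularity step, and specifically the construction of $I\in\cIm(V)$ and $J\in\cIm(W)$ whose intersection is forced to be maximal in $\cI(V\cap W)$; without this alignment the axioms (R3$'$) and (R4$'$) cannot be chained together, because (R3$'$) controls only $\max_{K'\in\cI(I+J)}\tilde\rho(K')$ relative to $\tilde\rho(I\cap J)$, not relative to the \emph{a priori} larger quantity $\rho(V\cap W)$. Starting from a maximal element of $\cI(V\cap W)$ and using (I3) to extend it on both sides is exactly what reduces the two quantities to one another.
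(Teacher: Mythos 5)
Your proof is correct and follows essentially the same route as the paper's: verify (R1)--(R2) directly, and for submodularity pick a maximal element of $\cI(V\cap W)$, extend it by iterated (I3) to elements of $\cIm(V)$ and $\cIm(W)$, invoke (I4) for $V+W$, and chain (R2$'$), (R3$'$), (R4$'$). Your explicit observation that the extension forces $I\cap J$ to coincide with the chosen maximal element of $\cI(V\cap W)$ is exactly the (slightly tacit) step in the paper's argument, so there is nothing to add.
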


Note that thanks to~(I3) the set $\cIm(V)$ is the set of maximal elements in the poset $(\cI(V),\leq)$.
Furthermore, by  (R2$'$) and (R4$'$) we have for all $V\in\cV(E)$
\[
      \rho(V)=\tilde{\rho}(I)\ \text{ for any }\ I\in\cIm(V).
\]

\begin{proof}
It is clear that~$\mu$ is a denominator of~$\rho$.
We have to show that~$\rho$ satisfies (R1)--(R3) from \cref{D-PMatroid}.
\\
(R1) Let $V\in\cV(E)$ and $I\in\cI$ such that $I\leq V$ and $\tilde{\rho}(I)=\rho(V)$. Then
$0\leq\tilde{\rho}(I)\leq\dim I\leq\dim V$, which establishes~(R1).
\\
(R2) Let $V,W\in\cV(E)$ be such that $V\leq W$. Let $I\in\cI$ be such that $I\leq V$ and $\tilde{\rho}(I)=\rho(V)$.
Then $I\leq W$ and the definition of~$\rho$ implies $\rho(W)\geq\tilde{\rho}(I)=\rho(V)$, as desired.
\\
(R3) Let $V,W\in\cV(E)$.  Choose $K\in\cIm(V\cap W)$. 
Applying~(I3) repeatedly, we can find $I\in\cIm(V)$ and $J\in\cIm(W)$ such that $K\leq I$ and $K\leq J$.
By~(I4) there exists $H\in\cIm(V+W)$ such that $H\leq I+J$.
Now (R4$'$) implies
\[
  \tilde{\rho}(I)=\rho(V),\quad  \tilde{\rho}(J)=\rho(W),\quad  \tilde{\rho}(H)=\rho(I+J)=\rho(V+W),\quad 
  \tilde{\rho}(K)=\tilde{\rho}(I\cap J)=\rho(V\cap W).
\]
From (R3$'$) we obtain $\rho(I+J)+\tilde{\rho}(I\cap J)\leq \tilde{\rho}(I)+\tilde{\rho}(J)$, and we finally arrive at
\[
  \rho(V+W)+\rho(V\cap W)=  \tilde{\rho}(H)+ \tilde{\rho}(K)= \rho(I+J)+\tilde{\rho}(I\cap J)\leq \tilde{\rho}(I)+\tilde{\rho}(J)
  =\rho(V)+\rho(W),
\]
as desired. Finally, (R1$'$) shows that the spaces in $\cI$ are $\mu$-independent, thus $\cI\subseteq\cI_\mu(\cM)$.
\end{proof}

The following example shows that in general the \qPM{} $\cM$ from  \cref{T-ExtIndSpaces} has more independent spaces 
than~$\cI$.

\begin{exa}\label{E-MoreIndSpaces}
Consider the \qPM{} $\cM=(\F^3,\rho_\cc)$ from \cref{E-IndSpacesRanks}(a).
We have seen already that $\cI_3(\cM)=\cV(\F^3)$.
Define the set  $\cI=\{V\in\cV(\F^3)\mid V\neq \subspace{e_1+e_2,e_3}\text{ and }V\neq\F^3\}$  and let $\tilde{\rho}=\rho_\cc|_{\cI}$.
One easily verifies that $(\cI,\tilde{\rho})$ satisfies (I1)--(I4) and (R1$'$)--(R4$'$).
Furthermore, the extension~$\rho$ defined in \cref{T-ExtIndSpaces} equals $\rho_\cc$ and thus the induced \qPM{} $(\F^3,\rho)$ equals~$\cM$.
Now we have $\cI\subsetneq\cI_3(\cM)$.

\end{exa}

We can easily force equality $\cI=\cI_\mu(\cM)$ by adding the following natural closure property.

\begin{theo}\label{T-ClosureIndep}
Let the pair $(\cI,\tilde{\rho})$ be as in \cref{T-ExtIndSpaces}.
Suppose $(\cI,\tilde{\rho})$ satisfies (I1)--(I4) and (R1$'$)--(R4$'$) as well as 
the following closure property:
\begin{mylist}
\item[(C)\hfill]  
           If $V\in\cV(E)$ is such that
           \begin{romanlist}
           \item all proper subspaces of~$V$ are in~$\cI$,
           \item $\max\{\tilde{\rho}(I)\mid I\in\cI(V)\}\geq\mu^{-1}\dim V$,
           \end{romanlist}           
           then $V$ is in~$\cI$.
\end{mylist}
Then $\cI=\cI_\mu(\cM)$ for the \qPM{} $\cM$ from \cref{T-ExtIndSpaces}. 
\end{theo}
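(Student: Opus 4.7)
The plan is to show the reverse inclusion $\cI_\mu(\cM) \subseteq \cI$, since \cref{T-ExtIndSpaces} already gives $\cI \subseteq \cI_\mu(\cM)$. I would proceed by induction on $\dim V$ for $V \in \cI_\mu(\cM)$, using (C) as the driving mechanism for the inductive step.

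For the base case, $\dim V = 0$ forces $V = \{0\}$, which lies in $\cI$ by (I1). For the inductive step, assume that every $\mu$-independent space of dimension strictly less than some $d \geq 1$ belongs to $\cI$, and let $V \in \cI_\mu(\cM)$ with $\dim V = d$. By \cref{D-Indep}, $\mu$-independence is inherited by subspaces (if $\rho(J') \geq \dim J'/\mu$ for all $J' \leq V$, then the same holds for any proper subspace $W \leq V$ and all its subspaces). Thus every proper subspace $W \lneq V$ is $\mu$-independent in $\cM$ and, by the induction hypothesis, lies in $\cI$. This verifies hypothesis~(i) of the closure property~(C) for $V$.

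For hypothesis~(ii), note that $\mu$-independence of $V$ applied to the subspace $V$ itself yields $\rho(V) \geq \mu^{-1} \dim V$. By the very definition of $\rho$ in \eqref{e-Extrho}, $\rho(V) = \max\{\tilde{\rho}(I) \mid I \in \cI(V)\}$, so
\[
   \max\{\tilde{\rho}(I) \mid I \in \cI(V)\} \;=\; \rho(V) \;\geq\; \mu^{-1}\dim V,
\]
which is precisely (ii). Applying (C) then places $V$ in $\cI$, completing the induction.

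There is no real obstacle here: the argument is a direct unpacking of definitions once one observes that $\mu$-independence descends to subspaces and that $\rho(V)$ on the right-hand side of \eqref{e-Extrho} agrees with the quantity appearing in (C)(ii). The substance of the theorem is entirely concentrated in the design of condition (C); the verification only requires combining it with the inductive hypothesis and the definition of $\rho$.
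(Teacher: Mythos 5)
Your proof is correct and follows essentially the same argument as the paper: induction on $\dim V$, using the heritability of $\mu$-independence and the induction hypothesis to verify (C)(i), and the definition of $\rho$ in \eqref{e-Extrho} together with $\mu$-independence of $V$ itself to verify (C)(ii). The only cosmetic difference is that you start the induction at dimension $0$ while the paper starts at dimension $1$; both are fine.
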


Note that by (I2) and (R1$'$), any subspace $V\in\cI$ satisfies the properties in~(i) and~(ii).

\begin{proof}
Thanks to \cref{T-ExtIndSpaces} it remains to show that any $V\in\cI_\mu(\cM)$ is in~$\cI$.
Recall that $\rho(V)=\max\{\tilde{\rho}(I)\mid I\in\cI(V)\}$.
We induct on $\dim V$.
\\
1) Let $\dim V=1$. Then $\rho(V)\geq\mu^{-1}\dim V$ holds true by the definition of $\mu$-independence, hence~(ii) is satisfied. 
Property~(i) is trivially satisfied by~(I1).
Now~(C) implies $V\in\cI$.
\\
2) Let $\dim V=r$ and assume that all subspaces $V\in\cI_\mu(\cM)$ of dimension at most $r-1$ are in~$\cI$.
Since~$V\in\cI_\mu(\cM)$, the same is true for all its subspaces. 
Hence all proper subspaces are in~$\cI$ by induction hypothesis. 
Again, $\rho(V)\geq\mu^{-1}\dim V$ is true by $\mu$-independence and thus Property~(C) implies that $V\in\cI$.
\end{proof}

\section{Minimal Spanning Spaces and Maximal Independent Spaces}\label{S-SpSp}
By definition, the maximal independent spaces in a \qPM{} are the bases. 
In this section, we introduce spanning spaces and show that -- differently from $q$-matroids -- bases are not the 
same as minimal spanning spaces.
However, spanning spaces turn out to be the dual notion to strongly independent spaces, which we also define in this section.
This result may be regarded as the generalization of the duality result for bases in $q$-matroids.
The latter states that for a $q$-matroid~$\cM$ a space $B$ is a basis of~$\cM$ if and only if $B^\perp$ is a basis of~$\cM^*$.
We show that in fact this equivalence characterizes $q$-matroids within the class of \qPM{}s.

\begin{defi}\label{D-SpSp}
Let $\cM=(E,\rho)$ be a \qPM{}. A subspace $V\in\cV(E)$ is called a \emph{spanning space} if 
$\rho(V)=\rho(E)$.
Furthermore,~$V$ is a \emph{minimal spanning space} if it is a spanning space and no proper subspace is a spanning space.
\end{defi}

For $q$-matroids the minimal spanning spaces are exactly the bases.

\begin{prop}\label{R-SpSp}
Suppose $\cM=(E,\rho)$ is a $q$-matroid. 
A subspace is a basis if and only if it is a minimal spanning space.
As a consequence, all minimal spanning spaces of~$\cM$ have the same dimension.
\end{prop}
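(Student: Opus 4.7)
The plan is to prove both implications using the $q$-matroid characterization of bases provided in Remark~\ref{R-BasesqMatroid}, which states that in a $q$-matroid a space $B$ is a basis if and only if $B$ is independent and $\rho(B)=\rho(E)$. Recall that in a $q$-matroid we have $\mu=1$ and hence $\rho(I)=\dim I$ for every independent $I$.

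For the forward direction, let $B$ be a basis of $\cM$. By Remark~\ref{R-BasesqMatroid} we immediately get $\rho(B)=\rho(E)$, so $B$ is a spanning space. To verify minimality, let $W$ be any proper subspace of $B$. By (I2), $W$ is independent, so $\rho(W)=\dim W<\dim B=\rho(B)=\rho(E)$, and hence $W$ is not spanning. Thus $B$ is a minimal spanning space.

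For the reverse direction, let $V$ be a minimal spanning space. Applying \cref{T-rhomax} with $\mu=1$, there exists an independent subspace $I\leq V$ with $\rho(I)=\rho(V)=\rho(E)$. Then $I$ is itself a spanning space contained in $V$, so minimality of $V$ forces $I=V$. Hence $V$ is independent with $\rho(V)=\rho(E)$, and Remark~\ref{R-BasesqMatroid} shows $V$ is a basis.

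The final sentence of the statement is essentially a corollary: by \cref{D-MaxInd}, all bases of $E$ share a common dimension, so the equivalence just established implies that all minimal spanning spaces of~$\cM$ also have this common dimension. I do not foresee a real obstacle here, because the $q$-matroid hypothesis removes the rank/dimension discrepancy that causes trouble in the general \qPM\ setting (see \cref{E-MaxRho}); the only subtlety is remembering to invoke \cref{T-rhomax} to produce an independent subspace with the same rank as $V$, rather than trying to argue directly that $V$ itself is independent.
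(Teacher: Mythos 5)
Your proof is correct and follows essentially the same route as the paper: the forward direction is identical, and for the reverse direction you produce an independent subspace of $V$ attaining $\rho(V)=\rho(E)$ and let minimality force it to equal $V$. The only cosmetic difference is that you invoke \cref{T-rhomax} directly, whereas the paper phrases it as a contradiction via \cref{T-Basisrho} applied to a maximal independent subspace of a supposedly dependent $V$; the two arguments are interchangeable.
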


\begin{proof}
``$\Rightarrow$'' Let $V$ be a basis of~$\cM$. Then $\dim V=\rho(V)=\rho(E)$. 
For every proper subspace $W\lneq V$ we have $\rho(W)\leq\dim W<\dim V=\rho(V)=\rho(E)$, hence~$W$ 
is not a spanning space. This proves minimality of~$V$.
``$\Leftarrow$''  Let now~$V$ be a minimal spanning space. By definition $\rho(V)=\rho(E)$, and thus it remains to show 
that~$V$ is independent (see \cref{R-BasesqMatroid}).
Suppose~$V$ is dependent.
Then there exists a maximal independent subspace~$W$ of~$V$, and thanks to 
\cref{T-Basisrho} we have $\rho(W)=\rho(V)=\rho(E)$. This contradicts minimality of~$V$.
\end{proof}

The last result is not true for \qPM{}s.

\begin{exa}\label{E-MinSpSp}
\begin{alphalist}
\item The \qPM{} $\cM_\cc(\cC_1)$ from \cref{E-MaxRho}(a) has 126 minimal spanning spaces and they all have dimension~$3$,   
         whereas the bases are the $4$-dimensional spaces.
\item The \qPM{} $\cM$ from \cref{E-MaxRho}(b) has~$4$ (resp.~$10$) minimal spanning spaces of dimension~$1$ (resp.~$2$),
         whereas the bases are the $3$-dimensional subspaces.
\end{alphalist}
\end{exa}

There exist \qPM{}s that are not $q$-matroids and yet  the bases coincide with the minimal spanning spaces
(for instance the \qPM{} $\cM_\cc(\cC)$ in \cite[Ex.~4.2]{GLJ21}).
Thus the equivalence in \cref{R-SpSp} does not characterize $q$-matroids.

The following describes the relation between bases and minimal spanning spaces in a \qPM.

\begin{prop}\label{P-mSpSpBasis}
Let $\cM=(E,\rho)$ be a \qPM{} with denominator~$\mu$.
\begin{alphalist}
\item A minimal spanning space is $\mu$-independent.
\item Every $\mu$-basis of~$\cM$ contains a minimal spanning space and every minimal spanning space is contained in a 
$\mu$-basis.
\end{alphalist}
\end{prop}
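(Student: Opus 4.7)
My plan is to derive both parts from \cref{T-Basisrho}, which guarantees that every $\mu$-basis of a subspace $V$ has rank value equal to $\rho(V)$, together with the exchange property (I3) of \cref{C-Indep}.

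For part (a), I will argue by contradiction. Let $S$ be a minimal spanning space and suppose $S$ is $\mu$-dependent. Then $S\notin\cI_\mu(\cM)$, so any $I\in\cB_\mu(S)$ is a \emph{proper} subspace of $S$. Applying \cref{T-Basisrho} with $V=S$ gives $\rho(I)=\rho(S)=\rho(E)$, so $I$ is itself a spanning space strictly contained in $S$, contradicting the minimality of $S$. Hence $S$ must be $\mu$-independent.

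For part (b), the first containment is nearly immediate. If $B\in\cB_\mu(E)$, then \cref{T-Basisrho} (applied with $V=E$) yields $\rho(B)=\rho(E)$, so $B$ is a spanning space. Since $\cV(B)$ is finite, the collection of spanning subspaces of $B$ has an inclusion-minimal element $S\leq B$; and any proper subspace of $S$ is already a proper subspace of $B$, so $S$ is in fact a minimal spanning space of $\cM$. For the second containment, let $S$ be a minimal spanning space. By part (a), $S\in\cI_\mu(\cM)$. Picking any $\mu$-basis $B_0$ of $\cM$ and repeatedly invoking (I3), I can extend $S$ one dimension at a time to a $\mu$-independent space $B$ with $\dim B=\dim B_0$. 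By the dimension-maximality characterization of $\mu$-bases noted after \cref{C-Indep}, $B\in\cB_\mu(E)$, and it contains $S$ by construction.

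No significant obstacle is expected: the proposition is essentially a direct corollary of \cref{T-Basisrho} together with (I3). The one conceptual point worth highlighting is that (a) is precisely what enables the application of (I3) in the proof of (b); without knowing that a minimal spanning space is independent, one could not extend it to a basis via the exchange property.
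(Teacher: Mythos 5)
Your proof is correct and follows essentially the same route as the paper: part (a) is the same contradiction argument via \cref{T-Basisrho}, and part (b), which the paper dismisses as clear, is exactly the routine argument you spell out (a $\mu$-basis is spanning by \cref{T-Basisrho}, so contains an inclusion-minimal spanning space; a minimal spanning space is independent by (a) and extends to a $\mu$-basis by repeated use of (I3)).
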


\begin{proof}
(a) Let~$V$ be a minimal spanning space. If~$V$ is $\mu$-dependent, then~$V$ contains a maximal $\mu$-independent subspace~$W$, and \cref{T-Basisrho} implies $\rho(W)=\rho(V)=\rho(E)$. This contradicts minimality of~$V$.
\\
(b) is clear.
\end{proof}

Recall the dual \qPM{} from \cref{T-DualqPM}.
Our next result shows that bases are compatible with duality in ``the expected way'' if and 
only if the \qPM{} is a $q$-matroid. 
Part~(a) has been established in \cite{JuPe18}.

\begin{prop}\label{P-BasesDuality}
Let $\cM=(E,\rho)$ be a \qPM{}.
Fix a non-degenerate symmetric bilinear form $\inner{\cdot}{\cdot}$ on~$E$ and let
$\cM^*=(E,\rho^*)$ be the dual of~$\cM$ w.r.t.\ $\inner{\cdot}{\cdot}$. 
\begin{alphalist}
\item If $\cM$ is a $q$-matroid, then for every basis~$B$ of~$\cM$ the orthogonal space $B^\perp$ is a basis of~$\cM^*$.
\item Let~$\mu$ be a denominator of~$\cM$. Suppose there exists a $\mu$-basis $B$ of~$\cM$ such that the orthogonal space $B^\perp$ is a $\mu$-basis of~$\cM^*$. Then $\cM$ is a $q$-matroid.
\end{alphalist} 
\end{prop}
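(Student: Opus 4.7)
My plan for part (a) is a direct calculation. If $\cM$ is a $q$-matroid and $B$ is a basis, then \cref{R-BasesqMatroid} gives $\rho(B)=\dim B=\rho(E)$. Plugging into the formula for $\rho^*$ from \cref{T-DualqPM} one obtains $\rho^*(B^\perp)=\dim B^\perp+\rho(B)-\rho(E)=\dim B^\perp$ and $\rho^*(E)=\dim E-\rho(E)=\dim B^\perp$, so $\rho^*(B^\perp)=\dim B^\perp=\rho^*(E)$. Applying \cref{R-BasesqMatroid} to the $q$-matroid $\cM^*$ then identifies $B^\perp$ as a basis of $\cM^*$.

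For part (b), my strategy is to first use both hypotheses to force the integer equality $\dim B=\rho(E)$, and then to propagate strong independence from $B$ to every subspace using the basis machinery. By \cref{T-Basisrho} applied to $\cM$ and $\cM^*$ we have $\rho(B)=\rho(E)$ and $\rho^*(B^\perp)=\rho^*(E)$. Combining $\mu$-independence of~$B$ with~(R1) yields $\rho(E)\leq\dim B\leq\mu\rho(E)$, and the analogous statement in~$\cM^*$ reads $\rho^*(E)\leq\dim B^\perp\leq\mu\rho^*(E)$. Substituting $\rho^*(E)=\dim E-\rho(E)$ and $\dim B^\perp=\dim E-\dim B$ turns the lower bound on $\dim B^\perp$ into the upper bound $\dim B\leq\rho(E)$. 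Hence $\dim B=\rho(E)\in\N_0$ and $\rho(B)=\dim B$.

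Now I extend this to arbitrary $V\leq E$. From $\rho(B)=\dim B$ and \cref{R-IndepMatroid}, $\rho(W)=\dim W$ for every $W\leq B$. The exchange property~(I3) of \cref{C-Indep}, applied to any two $\mu$-bases, implies that every $\mu$-basis $B'$ of $\cM$ has the same dimension $\dim B=\rho(E)$; combined with \cref{T-Basisrho} this gives $\rho(B')=\rho(E)=\dim B'$, so a second application of \cref{R-IndepMatroid} yields $\rho(W)=\dim W$ for all $W\leq B'$. Finally, given any $V\leq E$, I pick a $\mu$-basis $I$ of $V$ and extend it via repeated use of~(I3) to a $\mu$-basis $B'$ of $\cM$ containing $I$; since $I\leq B'$, \cref{T-Basisrho} yields $\rho(V)=\rho(I)=\dim I\in\N_0$, so $\cM$ is a $q$-matroid. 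The only genuinely subtle step is the dimension pin-down in the previous paragraph: both hypotheses are essential to squeeze $\dim B$ to exactly $\rho(E)$, and without this integer equality the subsequent propagation via \cref{R-IndepMatroid} fails.
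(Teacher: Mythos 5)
Your proof is correct. For part (a) you give the direct computation (the paper simply cites Jurrius--Pellikaan), and for part (b) your argument is essentially the paper's, run in the primal rather than the dual: the paper computes $\rho^*(B^\perp)=\dim B^\perp$ from the duality formula, propagates $\rho^*(I)=\dim I$ to all $\mu$-independent spaces of $\cM^*$ via \cref{R-IndepMatroid} and \cref{T-rhomax}, and then invokes biduality $\rho=\rho^{**}$; you instead pin down $\dim B=\rho(E)$ (your sandwich $\rho(E)\leq\dim B\leq\rho(E)$ is equivalent to the one-line computation $\dim B^\perp=\rho^*(B^\perp)=\dim B^\perp+\rho(B)-\rho(E)$) and propagate integrality of $\rho$ itself, avoiding the biduality step at the cost of applying \cref{T-Basisrho} in $\cM^*$ as well. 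One cosmetic remark: the appeal to (I3) to show all $\mu$-bases of $\cM$ share the dimension of $B$ is unnecessary, since $\mu$-bases are by definition the $\mu$-independent spaces of maximal dimension.
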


\begin{proof}
(a) has been proven in \cite[Thm.~45]{JuPe18}.
\\
(b)
Let $B$ be a $\mu$-basis of~$\cM$ and $B^\perp$ be a $\mu$-basis of~$\cM^*$. 
Then $\rho(B)=\rho(E)$ and thus $\rho^*(B^\perp)=\dim B^\perp+\rho(B)-\rho(E)=\dim B^\perp$.
\cref{T-Basisrho} implies that every basis $\hat{B}$ of $\cM^*$ satisfies
$\rho^*(\hat{B})=\rho^*(B^\perp)=\dim B^\perp=\dim\hat{B}$.
Now~\eqref{e-rhoVdimV} yields $\rho^*(I)=\dim I$ for all $\mu$-independent spaces~$I$ of~$\cM^*$.
Hence  the dual rank function $\rho^*$ is integer-valued on the $\mu$-independent spaces.
But then the entire rank function~$\rho^*$ is integer-valued thanks to \cref{T-rhomax}.
Now $\rho=\rho^{**}$ is also integer-valued, which means that $\cM$ is a $q$-matroid.
\end{proof}

The above result has an interesting consequence.
Recall from \cref{T-AuxMatroid} the auxiliary $q$-matroid $\cZ_{\cM,\mu}$ of a \qPM{} $\cM$ with denominator~$\mu$.
Part~(b) above implies that if~$\cM$ is a \qPM{} that is not a $q$-matroid, then $\cZ_{\cM^*,\mu}\not\approx\cZ_{\cM,\mu}^*$.
Indeed, from \cref{T-AuxMatroid} we know that a subspace $B\in\cV(E)$ is a $\mu$-basis in~$\cM$ if and only if it is a basis
in $\cZ_{\cM,\mu}$.
Thanks to \cref{P-BasesDuality}(a) the latter is equivalent to $B^\perp$ being in basis in $\cZ_{\cM,\mu}^*$.
But by \cref{P-BasesDuality}(a) $B^\perp$ is not a basis of~$\cM^*$, and thus not of $\cZ_{\cM^*,\mu}$.

As we will show next, Part~(a) can be generalized to \qPM{}s if one replaces bases by minimal spanning spaces 
in~$\cM$ and by maximally strongly independent spaces in~$\cM^*$, where the latter are defined as follows.

\begin{defi}\label{D-StrInd}
Let $\cM=(E,\rho)$ be a \qPM{}. 
A subspace $V\in\cV(E)$ is \emph{strongly independent} if $\rho(V)=\dim V$.
A subspace $V\in\cV(E)$ is \emph{maximally strongly independent} if it is strongly independent and not properly 
contained in a strongly independent subspace.
\end{defi}

From \cref{R-IndepMatroid} we know that strongly independent subspaces are $\mu$-independent for every 
denominator~$\mu$ of~$\cM$.
Furthermore, in $q$-matroids strong independence coincides with independence.
We remark that strongly independent subspaces play a crucial role in \cite{BCIJ21} for the construction of subspace designs.

Now we have the following simple result. 
It shows that spanning spaces and strongly independent spaces are mutually dual.
This may be regarded a
generalization of  \cite[Prop.~83]{BCJ21} and \cite[Thm.~45]{JuPe18}, where the same results have been established for
$q$-matroids.

\begin{prop}\label{P-StrIndepDuality}
Let $\cM$ and~$\cM^*$ be as in \cref{P-BasesDuality} and let $V\in\cV(E)$.
Then $V$ is a (minimal) spanning space in $\cM$ if and only if $V^\perp$ is (maximally) strongly 
independent in $\cM^*$.
\end{prop}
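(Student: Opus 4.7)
The plan is to reduce everything to the duality formula from Theorem~\ref{T-DualqPM} together with the order-reversing nature of the orthogonal complement with respect to the non-degenerate form $\inner{\cdot}{\cdot}$.

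First I would record the key identity. Applying the formula $\rho^*(W)=\dim W+\rho(W^\perp)-\rho(E)$ to $W=V^\perp$ and using $(V^\perp)^\perp=V$ (by non-degeneracy of the bilinear form), one obtains
\[
  \rho^*(V^\perp)=\dim V^\perp+\rho(V)-\rho(E).
\]
From this identity it is immediate that $\rho^*(V^\perp)=\dim V^\perp$ if and only if $\rho(V)=\rho(E)$. In other words, $V$ is a spanning space of $\cM$ if and only if $V^\perp$ is strongly independent in~$\cM^*$. This settles the non-minimal version of the statement.

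Next I would transfer the minimality/maximality correspondence using the fact that $W\mapsto W^\perp$ is an inclusion-reversing bijection on $\cV(E)$, so $W\lneq V$ iff $V^\perp\lneq W^\perp$. Suppose $V$ is a minimal spanning space of~$\cM$. By the previous paragraph, $V^\perp$ is strongly independent in~$\cM^*$. If $U\in\cV(E)$ were a strongly independent subspace of~$\cM^*$ with $U\gneq V^\perp$, then $U^\perp\lneq V$ and, applying the equivalence to $U^\perp$, the space $U^\perp$ would be spanning in~$\cM$, contradicting minimality of~$V$. Hence $V^\perp$ is maximally strongly independent. Conversely, if $V^\perp$ is maximally strongly independent in~$\cM^*$, then $V$ is spanning in~$\cM$, and any proper subspace $W\lneq V$ would give $W^\perp\gneq V^\perp$ strongly independent (by the equivalence applied to~$W$), contradicting maximality of~$V^\perp$. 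Thus $V$ is a minimal spanning space.

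There is no real obstacle here: the statement is essentially a rephrasing of the duality identity $\rho^*(V^\perp)=\dim V^\perp+\rho(V)-\rho(E)$ under the bijection $V\leftrightarrow V^\perp$, and the minimal-versus-maximal part is just the order-reversing property of orthogonal complements applied to the equivalence already established.
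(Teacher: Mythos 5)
Your proposal is correct and follows essentially the same route as the paper, which derives the result directly from the identity $\rho^*(V^\perp)=\dim V^\perp+\rho(V)-\rho(E)$; you simply spell out the order-reversing bijection argument for the minimal/maximal part that the paper leaves implicit.
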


\begin{proof} This follows immediately from $\rho^*(V^\perp)=\dim V^\perp+\rho(V)-\rho(E)$.\end{proof}

In summary, for $q$-matroids the notions `minimal spanning space', `maximally strongly independent space', and `basis' coincide,
whereas these are distinct concepts for \qPM{}s.
We close the paper with a few remarks on the properties -- or rather lack thereof -- of strongly independent spaces and
spanning spaces in \qPM{}s.
In particular, neither the maximally strongly independent spaces nor the minimal spanning spaces behave as well as bases.
This is not surprising since neither collection consists of subspaces of constant dimension
 (see \cref{E-MinSpSp}(b) along with duality).

\begin{rem}
\begin{alphalist}
\item Let $\cM$ be a \qPM{} and $\tilde{\cI}$ be its collection of strongly independent subspaces. 
         The duality in \cref{P-StrIndepDuality} implies that $\tilde{\cI}$ is invariant under taking subspaces, i.e., it satisfies~(I2)
         of \cref{C-Indep} (see also \cref{R-IndepMatroid} and \cite[Lem.~6]{BCIJ21}). 
         It is not hard to find examples showing that $\tilde{\cI}$ does not satisfy~(I3) and~(I4). 
\item  In \cref{C-DepSpacesCircuits} we listed conditions (B1)--(B4) that are satisfied for bases in a \qPM{}. As discussed earlier,
          they give rise to a cryptomorphic definition of $q$-matroids, but not for \qPM{}s  (see \cref{E-MaxRho}).
        For \qPM{}s neither the maximally strongly independent subspaces nor the minimal spanning
        spaces satisfy  (B3) or (B4). 
\end{alphalist}

\end{rem}

\bibliographystyle{abbrv}

\end{document}